\definecolor{darkblue}{rgb}{0,0,0.7}
\definecolor{darkred}{rgb}{0.7,0,0}
\newtheorem{proposition}{Proposition}[section]
\newtheorem{lemma}[proposition]{Lemma}
\newtheorem{theorem}[proposition]{Theorem}
\newtheorem{construction}[proposition]{Construction}
\theoremstyle{definition}
\newtheorem{remark}[proposition]{Remark}
\newtheorem{example}[proposition]{Example}
\newtheorem{definition}[proposition]{Definition}
\newtheorem{notation}[proposition]{Notation}
\newcommand{\reg}{{\rm reg}}
\DeclareMathOperator{\fcdeg}{fcd}
\DeclareMathOperator{\hs}{HS}
\DeclareMathOperator{\hf}{HF}
\DeclareMathOperator{\hp}{HP}
\DeclareMathOperator{\cdeg}{cd}
\def\K{\mathbb{K}}
\def\reg{\mathrm{reg}}
\def\height{\mathrm{ht}}
\DeclareMathOperator{\Tor}{Tor}
\begin{document}
	
	\title[Hilbert Coefficients and Regularity of Binomial Edge Ideals]{Hilbert Coefficients  and Regularity of Binomial Edge Ideals}

	\author{Kanoy Kumar Das}
	\address{Chennai Mathematical Institute, India}
	\email{kanoydas0296@gmail.com; kanoydas@cmi.ac.in}

    \author[Rajiv Kumar]{Rajiv Kumar}
    \email{rajiv.kumar@iitjammu.ac.in}
    \address{Department of Mathematics, Indian Institute of Technology Jammu, J\&K, India - 181221.}

    \author[Paramhans Kushwaha]{Paramhans Kushwaha}
    \email{2022rma2004@iitjammu.ac.in}
    \address{Department of Mathematics, Indian Institute of Technology Jammu, J\&K, India - 181221.}

	\keywords{binomial edge ideal, Castelnuovo–Mumford regularity, Hilbert coefficients}
	\subjclass[2020]{13F65, 13D40, 05E40}
	
	\vspace*{-0.4cm}
	\begin{abstract}
		Let $G$ be a simple graph on $n$ vertices, and let $J_G$ denotes the corresponding binomial edge ideal in $S=\mathbb{K}[x_1,\ldots,x_n,y_1,\ldots,y_n]$, where $\K$ is a field. We show that if a vertex satisfies a certain degree condition, then some Hilbert coefficients remain unchanged upon its removal, thereby providing a reduction technique for computing Hilbert coefficients. As an application, for any $i\geq 0$ and a pair $(r,s)$ with $r\geq 2, s\in \mathbb Z$, we show that there always exists a graph $G$ such that $\mathrm{reg}(S/J_G)=r$ and $e_i(S/J_G)=s$, where $\mathrm{reg}(S/J_G)\text{ and } e_i(R/J_G)$ denote the Castelnuovo–Mumford regularity and the $i$-th Hilbert coefficient of $S/J_G$, respectively. In particular, this demonstrates that there is no inherent relationship between the regularity and the Hilbert coefficients for the class of binomial edge ideals. 
	\end{abstract}
    
	\maketitle
	
	\section{Introduction}
	Let $G$ be a finite simple graph with vertex set $[n]:=\{1,2, \ldots,n\}$ and edge set $E(G)$. Consider $S=\K[x_1,\ldots,x_n,y_1,\ldots, y_n]$ a polynomial ring over a field $\K$. Herzog et al. \cite{HHHFT2010-BEI-and-conditional-independence}, and independently Ohtani \cite{GraphsandIdealsGeneratedbysome2-minor-Ohtani-Masahiro} introduced a notion of binomial edge ideal corresponding to a finite simple graph. The \emph{binomial edge ideal} of $G$, denoted $J_G$, is defined as 
    \[
    J_G:=\left \langle x_iy_j-x_jy_i \mid ij \in E(G) \right \rangle \subseteq S.
    \]
    Combinatorially, $J_G$ can be interpreted as the ideal generated by a subset of maximal minors of a generic $2\times n$ matrix, corresponding precisely to the edges of $G$. This framework simultaneously generalizes two constructions: the determinantal ideal and the ideal of adjacent minors of a $ 2 \times n$ generic matrix. 

    The Hilbert polynomial of a finitely generated graded algebra carries fundamental structural information and can be computed from its minimal free resolution. The coefficients of the Hilbert polynomial, with certain binomial adjustments, are known as the Hilbert coefficients, among which the leading coefficient, often denoted by $e_0$, is known as the multiplicity. Multiplicity, an invariant with rich geometric interpretation, has been studied intensively for decades. Nevertheless, its computation for sufficiently general ideals is highly nontrivial, and the higher Hilbert coefficients are even less tractable. Although binomial edge ideals have been extensively studied, their Hilbert series and Hilbert functions have been addressed only in a limited number of works, namely \cite{KS2019-Hilbert-series-of-BEI, MohammadiSharifan2014, SchenzelZafar2014, ZafarZahid2013}. In particular, the multiplicity of binomial edge ideals has not been computed previously, and no general formula for it appears in the existing literature.

    In this work, we establish a general necessary condition on a vertex under which some Hilbert coefficients remain unchanged after its removal. More precisely, to each vertex $v$ of a graph $G$, we assign a graph-theoretic invariant, called the free-clique degree of $v$ (denoted $\fcdeg_G(v)$; see \Cref{def: FCD}), and prove the following.

    \medskip
    \noindent
    \textbf{Theorem~~\ref{thm: delete vertex}.} 
    Let $G$ be a graph and $v\in V(G)$ be such that $\fcdeg_G(v)\geq i+3$ for some  $i\geq 0$. ~Then 
		\[
		e_i(S/J_G)=e_i(S/J_{G\setminus{v}}).
		\]

    In the area of combinatorial commutative algebra, a popular line of study is the identification or existence of a class of ideals with a pair, or a tuple of algebraic invariants. Such studies have been carried out for the class of edge ideals of graphs in \cite{HibiKannoKimuraMatsudaVanTuyl2021, HibiKimuraMatsudaTsuchiya2021,  HibiMatsudaVanTuyl2019}, and the references therein; for binomial edge ideals in \cite{HibiMatsuda2022}. In this work, we consider the class of binomial edge ideals and study the pairs $(r,s)$, where $r=\reg(R/J_G)$ and $s=e_i(R/J_G)$ is the $i$-th Hilbert Coefficient. As an application of \Cref{thm: delete vertex}, we show that within the class of binomial edge ideals, any such pair can be realized. In particular, we prove the following.

    \medskip
    \noindent
    \textbf{Theorem~~\ref{thm: main}.} 
        Given $i\geq 0$ and a pair $(r,s)$ with $r\geq 2, s\in \mathbb Z$, there is a graph $G$ such that 
        \[
        \reg(S/J_G)=r \text{ and } e_i(S/J_G)=s.
        \]
    It is worth noting that in the case $\reg(S/J_G)=r=1$, that is, when $J_G$ has linear resolution, all the pairs of the form $(1,s)$, where $s=e_i(R/J_G)$ for some $i\geq 0$, are completely determined. This follows from the fact that $J_G$ has linear resolution if and only if $G$ is a complete graph (\cite[Theorem 2.1]{MadaniKiani2012}). In this case, $J_G$ is the determinantal ideal, and therefore its Hilbert polynomial is completely known (see \cite{ConcaHerzog1994}).

    This article is organized as follows. In \Cref{sec: prelim}, we recall the necessary algebraic and graph-theoretic concepts related to binomial edge ideals. \Cref{sec: main} contains the main result of the article, where we prove that certain Hilbert coefficient remains unchanged after the removal of a vertex with certain free-clique degree. This provides an inductive tool for determining the higher Hilbert coefficients of binomial edge ideals. An application of this result is given in \Cref{sec: application}. In \Cref{thm: reg=2 is sufficient for higher reg}, we present a method for constructing graphs with arbitrarily large regularity and prescribed Hilbert coefficients, starting from graphs of small regularity. Finally, in \Cref{thm: main} we show that for any $i\geq 0$ and integers $r\geq 2, s$, any pair $(r,s)$, where $r=\reg(S/J_G)$ and $s=e_i(S/J_G)$ can be realized.

	\section{Preliminaries}\label{sec: prelim}
	In this section, we recall all preliminary notions and results related to commutative algebra and combinatorics. Let $R=\K[z_1,\ldots , z_n]$ be a polynomial ring over a field $\K$ with $\deg(z_i)=1$ for all $1\leq i\leq n$, and $I\subseteq R$ a homogeneous ideal. The Castelnuovo-Mumford regularity(or simply, regularity) is a homological invariant that measures the complexity of the minimal free resolution of $R/I$. This invariant can be defined in terms of non-vanishing of the multigraded $\mathrm{Tor}$-modules of $R/I$ with respect to the field $\K$:
	\[
	\reg(R/I):=\sup\{j-i \mid \dim_{\K}(\Tor_i^R(R/I,\K)_j)\neq 0\}.
	\]
	This invariant has been extensively studied for several classes of ideals, including various families of monomial ideals and binomial edge ideals (see, for instance \cite{Banerjee2019, Ha2014}, and the recent survey \cite{jayanthan2025generalizedbinomialedgeideals}).

    We now review the fundamental concepts of Hilbert functions and Hilbert coefficients. In this article, we follow the convention that the set of all natural numbers $\mathbb{N}$ contains $0$.  The \emph{Hilbert function} of the graded ring $R/I$ is the function $\hf(R/I, \rule{0.2cm}{0.15mm}):\mathbb{N}\longrightarrow \mathbb{N}$ with 
    \[
    \hf(R/I, j)=\dim_{\K}(R/I)_j
    \]
    for all $j\in \mathbb N$, where $(R/I)_j$ denotes the $j$-th graded component of $R/I$. The generating function of this numerical function is called the \emph{Hilbert Series} of $R/I$, and is given as follows
    \[
    \hs(R/I,t)=\sum_{j\in \mathbb{N}}\hf(R/I,j)\ t^j.
    \]
    A classical theorem of Hilbert-Serre provides a reduced form of the Hilbert series
    \[
    \hs(R/I,t)=\frac{Q(t)}{(1-t)^d},
    \]
    where $Q(t)\in \mathbb{Z}[t]$ is a polynomial with $Q(1)\geq 1$ and $d=\dim(R/I)$. As a consequence, the Hilbert function, $\hf(R/I,j)$ eventually agrees with a polynomial in $j$ of degree $d-1$.
    This polynomial is called the Hilbert polynomial of $R/I$. Moreover, the coefficients of the Hilbert polynomial can be recovered from the numerator $Q(t)$. More precisely, if $\hp(R/I, X)$ denotes the Hilbert polynomial of $R/I$, and is written in the form
    \[
    \hp(R/I, X)= \sum_{i=1}^{d-1}(-1)^{d-1-i}e_{d-1-i} \left( \begin{array}{c} X+i \\ i \end{array} \right),
    \]
    then the coefficients $e_i \text{ for }0\leq i\leq d-1$ can be obtained as  follows: $e_i=\frac{Q^{(i)}(1)}{i!}$, where $Q^{(i)}(t)$ denotes the $i$-th derivative of the polynomial $Q(t)$. The number $e_i$ is called the 
    \emph{$i$-th Hilbert coefficient} of $R/I$, and   be denoted by $e_i(R/I)$ for $0\leq i\leq d-1$. In particular, when $\dim(R/I)>0$, the coefficient $e_0(R/I)$ is the well-known \emph{multiplicity} of $R/I$. 
	
	A graph consists of a pair $(V(G), E(G))$, where $V(G)$ is called the set of vertices, and $E(G)\subseteq 2^{V(G)}$ is the set of edges of $G$. In this article, we consider only simple graphs, that is, graphs with no loops or multiple edges.  Throughout this article, unless explicitly stated otherwise, we take $V(G)=[n]=\{1,\dots , n\}$, and we   write $E(G)=\{ab\mid \text{there is an edge between }a \text{ and } b\}$.  For simplicity, we sometimes specify only the edge set of a graph and write 
    $G=E(G)$, without explicitly mentioning the vertex set. If $G$ is not connected, and if $G_1,\ldots , G_r$ be the connected components of $G$, we   write $G=G_1\sqcup \cdots \sqcup G_r$. In the case when $E(G)=\emptyset$, we simply write $G=\{a_1,\dots , a_r\}$ to indicate that the graph $G$ consists only of vertices $a_1,\dots , a_r$ of $G$. A \emph{subgraph} $H$ of $G$ is a graph where $V(H)\subseteq V(G) \text{ and } E(H)\subseteq E(G)$. An \emph{induced subgraph} of $G$ on a vertex set $T\subseteq V(G)$ is the subgraph $H$ with $V(H)=T$ and $E(H)=\{ab\mid ab\in E(G)\text{ and } a,b\in T\}$. Given a subset $W\subseteq V(G)$, $G\setminus W$ is the induced subgraph of $G$ on the vertex set $V(G)\setminus W$. For $W=\{v\}$, we simply write $G\setminus v$. A \emph{complete graph} or a \emph{clique} $K_r, r\geq 1$   is a graph on $[r]$ such that $ij\in E(G)$ for all $i\neq j\in [r]$. We say that a subgraph $H$ of $G$ is a \emph{clique of $G$} if $H$ is a complete graph. A maximal clique of $G$ is a clique that is not an induced subgraph of any other clique of $G$. For $v\in V(G)$, let $\cdeg_G(v)$ denote the number of the maximal cliques that contain $v$, called the \emph{clique degree} of $v$. A vertex of clique degree one is called a \emph{free} (or, \emph{simplicial}) vertex. For example, any vertex of a complete graph is a free vertex. A \emph{complete bipartite graph}, denoted by $K_{n,r}, n,r\geq 1$, is a graph with  $V(K_{n,r})=[n+r]$ and $E(K_{n,r})=\{ij\mid 1\leq i\leq n, n+1\leq j\leq r\}$. For any graph-theoretic terminology not explained here, we refer the reader to look at the standard text of graph theory \cite{BondyMurty2008}.

	We now recall several results related to the main algebraic object of this article, the binomial edge ideal. This class of ideals have been studied extensively in the literature, and here, we recall the properties that we intend to use later in this article. Let $G$ be a graph on the vertex set $[n]$ and $J_G\subseteq S=\K[x_1,\ldots, x_n, y_1, \ldots , y_n]$ be the binomial edge ideal of $G$. Throughout the remainder of this article, $S$   denote the polynomial ring with the appropriate number of variables in which the ideal $J_G$ is defined. The associated primes of $J_G$ are first determined in \cite{HHHFT2010-BEI-and-conditional-independence}, and they are given using the connectivity property of the graph $G$. To understand the associated primes and the primary decomposition of $J_G$, we require a few notations. Let $T\subseteq [n]$ be a subset of vertices of $G$, and consider $G\setminus T$, the induced subgraph of $G$ on the vertex set $[n]\setminus T$. Suppose that $G_1,\ldots , G_{c_G(T)}$ be the connected components of $G\setminus T$. For $1\leq i\leq c_G(T)$, let $\widetilde{G_i}$ denote the complete graph on the vertex set $V(G_i)$. Now consider the ideal 
    \[
    P_T(G)\coloneqq \left \langle \bigcup_{i\in T}\{x_i,y_i\} , J_{\widetilde{G_1}}, \ldots ,  J_{\widetilde{G_{c(T)}}}  \right\rangle.
    \]
    The ideal $P_T(G)$ is a prime ideal of $S$, and $\height(P_T(G))=n+|T|-c_G(T)$. Moreover, the primary decomposition of $J_G$ (see \cite[Theorem 3.2]{HHHFT2010-BEI-and-conditional-independence}) is given by 
    \[
    J_G=\bigcap_{T\subseteq [n]}P_T(G).
    \]
    Thus, we have $\dim(S/J_G)=\max\{n-|T|+c_G(T):T\subset [n]\}$. It should be noted that the above primary decomposition of $J_G$ is not always minimal. To identify the minimal prime ideals among $P_T(G)$, we require a graph-theoretic notion known as the \emph{cut point property}. A vertex $v\in V(G)$ is called a \emph{cut point} of $G$ if the removal of the vertex $v$ increases the number of connected components of $G$. If each vertex $i\in T$ is a cut point of the induced subgraph $G\setminus (T\setminus \{i\})$ of $G$, i.e., $c_{G\setminus T}> c_{G\setminus (T\setminus \{i\})}$, then $T$ is said to have the {cut point property} for $G$. Set
	\[
	\mathcal{C}(G)\coloneqq \{\emptyset \}\cup \{T\subseteq [n]\mid T\text{ has the cut point property for } G\}.
	\]
	Then the minimal primes of $J_G$ correspond to the subset of the vertex set $V(G)$ with the cut point property.
	
	\begin{theorem}\cite[Corollary 3.9]{HHHFT2010-BEI-and-conditional-independence}
		Let $G$ be a graph on $[n]$ and $T\subseteq [n]$. Then $P_T(G)$ is a minimal prime ideal of $S/J_G$ if and only if $T\in \mathcal{C}(G)$.
	\end{theorem}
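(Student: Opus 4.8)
The plan is to reduce the statement to a purely combinatorial question about when one of the primes $P_{T'}(G)$ is contained in another, $P_T(G)$, and then to read off the minimality condition. Since the primary decomposition $J_G=\bigcap_{T\subseteq[n]}P_T(G)$ exhibits $J_G$ as a finite intersection of prime ideals, every minimal prime of $S/J_G$ occurs among the $P_T(G)$: if a prime $\mathfrak{p}\supseteq J_G\supseteq\prod_T P_T(G)$, then $\mathfrak{p}\supseteq P_T(G)$ for some $T$, and minimality over $J_G$ forces $\mathfrak{p}=P_T(G)$. Hence $P_T(G)$ is a minimal prime of $S/J_G$ if and only if no $P_{T'}(G)$ is \emph{strictly} contained in it, and the whole theorem reduces to understanding the containments $P_{T'}(G)\subseteq P_T(G)$.

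The heart of the argument is therefore the following inclusion criterion, which I would establish first: $P_{T'}(G)\subseteq P_T(G)$ holds if and only if (i) $T'\subseteq T$, and (ii) any two vertices of $[n]\setminus T$ lying in the same connected component of $G\setminus T'$ also lie in the same component of $G\setminus T$. For (i), note that the degree-one component of $P_T(G)$ is exactly the span of $\{x_j,y_j\mid j\in T\}$, because all remaining generators $J_{\widetilde{G_i}}$ are quadratic; comparing the linear parts of the two primes gives $T'\subseteq T$. For (ii), the quadratic generators of $P_{T'}(G)$ are the binomials $x_ay_b-x_by_a$ with $a,b$ in a common component of $G\setminus T'$; whenever $a\in T$ or $b\in T$ such a binomial already lies in $\langle x_a,y_a\rangle$ or $\langle x_b,y_b\rangle\subseteq P_T(G)$, so the only content is the case $a,b\notin T$. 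I expect this case to be the main obstacle: one must show $x_ay_b-x_by_a\in P_T(G)$ \emph{precisely} when $a,b$ lie in the same component of $G\setminus T$. The clean way to see this is that, since the components of $G\setminus T$ are supported on pairwise disjoint vertex sets, $S/P_T(G)$ is (after killing the variables indexed by $T$) a tensor product over $\K$ of the determinantal quotients attached to the $\widetilde{G_i}$, each of which is a domain; in this ring $x_ay_b-x_by_a$ vanishes for $a,b$ in one component and is a nonzero element lying across two distinct tensor factors when $a,b$ are in different components. This gives the equivalence and hence criterion (ii).

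With the criterion in hand, the two directions follow quickly. For the forward direction, suppose $T\notin\mathcal{C}(G)$ and choose $i\in T$ that is \emph{not} a cut point of $G\setminus(T\setminus\{i\})$; set $T'=T\setminus\{i\}$. Removing the non-cut vertex $i$ cannot separate any pair of vertices that were connected, so criterion (ii) holds automatically, and (i) is clear, giving $P_{T'}(G)\subseteq P_T(G)$. Since $x_i\in P_T(G)\setminus P_{T'}(G)$, the inclusion is strict, so $P_T(G)$ is not minimal.

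For the converse, assume $T\in\mathcal{C}(G)$ and suppose for contradiction that $P_{T'}(G)\subsetneq P_T(G)$. The criterion forces $T'\subseteq T$, and strictness yields $T'\subsetneq T$, so I may pick $i\in T\setminus T'$. Because $T$ has the cut point property, $i$ is a cut point of $G\setminus(T\setminus\{i\})$, i.e. its removal disconnects some component; this produces two vertices $a,b\in[n]\setminus T$ lying in a common component of $G\setminus(T\setminus\{i\})$, hence of the larger graph $G\setminus T'$ (which has fewer vertices removed), but in \emph{different} components of $G\setminus T=(G\setminus(T\setminus\{i\}))\setminus\{i\}$. This directly contradicts criterion (ii), so no such $T'$ exists and $P_T(G)$ is minimal. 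Combining the two directions gives that $P_T(G)$ is a minimal prime exactly when $T\in\mathcal{C}(G)$.
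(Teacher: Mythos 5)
This statement is quoted in the paper as a preliminary (Corollary 3.9 of the cited Herzog--Hibi--Hreinsd\'ottir--Kahle--Rauh paper) and is not proved there, so there is no internal proof to compare against; your argument must be judged on its own, and it holds up. Your reduction of minimality to the containment question among the primes $P_T(G)$ is exactly right, and your inclusion criterion --- $P_{T'}(G)\subseteq P_T(G)$ if and only if $T'\subseteq T$ and the partition of $[n]\setminus T$ into components of $G\setminus T'$ refines into that of $G\setminus T$ --- is correct, as are both graph-theoretic directions: a non-cut vertex $i\in T$ yields $P_{T\setminus\{i\}}(G)\subsetneq P_T(G)$, and the cut point property for $T$ blocks any strict containment $P_{T'}(G)\subsetneq P_T(G)$ by producing a pair $a,b\notin T$ joined in $G\setminus T'$ but separated in $G\setminus T$. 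This is essentially the mechanism behind the original proof in the cited source as well.

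One soft spot worth tightening: in the key step showing $x_ay_b-x_by_a\notin P_T(G)$ when $a,b\notin T$ lie in different components of $G\setminus T$, you invoke that the tensor factors $\K[x_j,y_j : j\in V(G_i)]/J_{\widetilde{G_i}}$ are domains. That property is neither quite what you need nor safe to lean on as stated (over a non-algebraically-closed field a tensor product of domains need not be a domain). Fortunately the step needs much less: writing the element as $x_a\otimes y_b - y_a\otimes x_b$ in $R_1\otimes_\K R_2$ (tensored with $1$ in the remaining factors), it is nonzero because $x_a,y_a$ are linearly independent in degree one of $R_1$ (the quadric relations do not touch degree one) and $y_b\neq 0$ in $R_2$; expanding against a $\K$-basis of $R_1$ extending $\{x_a,y_a\}$ gives the claim. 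With that replacement the proof is complete and self-contained, modulo the facts the paper already states (that each $P_T(G)$ is prime and that $J_G=\bigcap_T P_T(G)$).
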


	The notion of \textit{decomposable graphs} were introduced in \cite{OntheextremalBettinumofBEIofblockgraphds} to study the Betti numbers of binomial edge ideals of block graphs. We briefly recall the definition here. A graph $G$ is called \emph{decomposable} if there exist subgraphs $G_1$ and $G_2$ of $G$ such that $E(G)=E(G_1)\cup E(G_2)$ with $V(G_1)\cap V(G_2)=\{v\}$, where $v$ is a free vertex in both $G_1$ and $G_2$. A graph $G$ is called \emph{indecomposable} if it is not decomposable. Since the notion of decomposability is a primary tool to construct many of the graphs in this article, we fix the following notation:
	\begin{notation}
		Following the above notation, if $G$ is decomposable, we write \[G=G_1 \cup_{v} G_2,\] 
		and say that this is a \emph{decomposition} of $G$.
	\end{notation}

    The counterpart of decomposition is the clique sum of two graphs at the vertices that are free in both the graphs. Specifically, if $v\in G_1, w\in G_2$ are free vertices, and $G$ is the clique sum of $G_1$ and $G_2$ at the vertices $v$ and $w$, then $G=G_1\cup_{v=w}G_2$. We now recall two results from \cite[Proposition 3]{OntheextremalBettinumofBEIofblockgraphds}, and \cite[Theorem 3.1]{RegBEIcertainblockgraph} which   be used in the construction of graphs.
	
	\begin{theorem}
		\label{thm: reg and mult under decomposition}
		Let $G=G_1\cup_{v} G_2$ be a decomposition of $G$. Then 
		\begin{enumerate}
			\item \label{thm: reg under decomposition} $\reg(S/J_G)=\reg(S/J_{G_1})+\reg(S/J_{G_2}).$
			\item \label{thm: HS under decomposition} $\hs({S/J_G},t)=(1-t)^2\hs({S_1/J_{G_1}},t)\hs({S_2/J_{G_2}},t),$ where $S_i, i=1,2$ are the corresponding polynomial rings associated to $J_{G_i}, i=1,2$.
		\end{enumerate}
	\end{theorem}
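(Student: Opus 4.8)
The plan is to deduce both statements from a single stronger assertion: that the minimal graded free resolution of $S/J_G$ over $S$ is the tensor product of the minimal free resolutions of $S_1/J_{G_1}$ over $S_1$ and of $S_2/J_{G_2}$ over $S_2$. Concretely, I would aim to prove the graded Betti-number identity
\[
\beta_{i,j}^S(S/J_G)=\sum_{\substack{i_1+i_2=i\\ j_1+j_2=j}}\beta_{i_1,j_1}^{S_1}(S_1/J_{G_1})\,\beta_{i_2,j_2}^{S_2}(S_2/J_{G_2}).
\]
Granting this, the regularity formula is immediate: a pair $(i,j)$ contributes to $S/J_G$ exactly when it splits as a sum of contributing pairs for the two factors, so the maximal value of $j-i$ is the sum of the two maximal values, giving $\reg(S/J_G)=\reg(S_1/J_{G_1})+\reg(S_2/J_{G_2})$. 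The Hilbert-series formula is equally formal: the $K$-polynomial $K(t)=\sum_{i,j}(-1)^i\beta_{i,j}t^j$ is multiplicative under the Betti identity, and since $S$ has $2(n_1+n_2-1)$ variables while $S_1,S_2$ have $2n_1,2n_2$, dividing each $K$-polynomial by the appropriate power of $(1-t)$ produces exactly the correction factor $(1-t)^2$.

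To establish the Betti identity I would pass through the disjoint union. Take separate copies $v_1\in G_1$, $v_2\in G_2$ of the gluing vertex and set $\tilde S=S_1\otimes_{\K}S_2$, so that $\tilde S/(J_{G_1}\tilde S+J_{G_2}\tilde S)\cong (S_1/J_{G_1})\otimes_{\K}(S_2/J_{G_2})$. Because the two ideals live in disjoint sets of variables, the tensor product of the two minimal resolutions is a minimal resolution, and Künneth over the field $\K$ yields the Betti identity for this disjoint-union quotient. Now $S/J_G$ is recovered by identifying the two copies of $v$: one has $S=\tilde S/(x_{v_1}-x_{v_2},\,y_{v_1}-y_{v_2})$, and since $v$ is free in both $G_1$ and $G_2$ there is no edge or clique straddling the two parts, so $J_G=J_{G_1}S+J_{G_2}S$ is the image of $J_{G_1}\tilde S+J_{G_2}\tilde S$. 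If $z_1:=x_{v_1}-x_{v_2}$ and $z_2:=y_{v_1}-y_{v_2}$ form a regular sequence on $\tilde S/(J_{G_1}\tilde S+J_{G_2}\tilde S)$, then reduction modulo a linear regular sequence preserves graded Betti numbers, and the identity transfers to $S/J_G$.

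The key step, and the combinatorial heart, is the regular sequence. By the stated primary decomposition $J_{G_i}=\bigcap_{T\in\C(G_i)}P_T(G_i)$ each $J_{G_i}$ is radical, so the disjoint-union quotient is reduced with associated primes $P_{T_1}(G_1)+P_{T_2}(G_2)$. Since $v$ is simplicial in both graphs, it is simplicial in every induced subgraph and hence never a cut vertex, so $v\notin T$ for every $T\in\C(G_i)$. Consequently neither $x_v$ nor $y_v$ lies in any $P_T(G_i)$; in fact $P_T(G_i)\cap\K[x_v,y_v]=0$, because modulo $P_T$ the entries $x_v,y_v$ form a column of the generic matrix of the complete-graph component containing $v$, and the two entries of a column of a rank-one matrix are algebraically independent. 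From $P_T(G_i)\cap\K[x_v,y_v]=0$ it follows that every nonzero linear form in $\langle x_v,y_v\rangle$ is a nonzerodivisor on $S_i/J_{G_i}$, and I would use this to check that $z_1$ avoids every associated prime of the disjoint-union quotient and that $z_2$ avoids every associated prime of the further quotient by $z_1$.

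Equivalently, one may recast the whole regular-sequence step as the Tor-independence $\Tor_i^{\K[x_v,y_v]}(S_1/J_{G_1},\,S_2/J_{G_2})=0$ for all $i\geq 1$. Here the vanishing of the annihilator $\bigl(0:_{S_i/J_{G_i}}\langle x_v,y_v\rangle\bigr)$ forces the projective dimension of $S_i/J_{G_i}$ over $\K[x_v,y_v]$ to be at most $1$, which kills all $\Tor$ in homological degrees $\geq 2$ automatically; the crux is therefore the single vanishing in degree $1$. This last point — equivalently, that $z_2$ is still a nonzerodivisor after cutting by $z_1$ — is the one genuinely nontrivial step, since $S_i/J_{G_i}$ is only torsion-free, not flat, over $\K[x_v,y_v]$ (its depth along $\langle x_v,y_v\rangle$ is exactly $1$), so no soft flatness argument disposes of it. This is the place where I expect the argument to require the most care.
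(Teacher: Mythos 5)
The first thing to note is that the paper does not prove this theorem at all: it is quoted as background, with citations to the literature (Proposition 3 of the extremal-Betti-numbers paper on block graphs, and Theorem 3.1 of the block-graph regularity paper), so your proposal can only be measured against the proofs in those sources. Your architecture is in fact the same as theirs: pass to the disjoint union with two copies $v_1,v_2$ of the gluing vertex, observe that the minimal free resolution of $\tilde S/(J_{G_1}+J_{G_2})$ over $\tilde S=S_1\otimes_{\K}S_2$ is the tensor product of the two minimal resolutions, and recover $S/J_G$ by factoring out $z_1=x_{v_1}-x_{v_2}$, $z_2=y_{v_1}-y_{v_2}$; the Betti-number convolution then gives both (1) and (2) exactly as you argue, and those deductions are sound. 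The genuine gap is the step you yourself flag and then postpone: you never prove that $z_1,z_2$ is a regular sequence on $\tilde S/(J_{G_1}+J_{G_2})$ --- concretely, that $z_2$ stays a nonzerodivisor modulo $z_1$, equivalently $\Tor_1^{\K[x_v,y_v]}(S_1/J_{G_1},S_2/J_{G_2})=0$. Ending with ``this is the place where I expect the argument to require the most care'' leaves the heart of the proof missing: this claim is precisely the content of the Rauf--Rinaldo-type lemma on which the cited proofs rest, and it is the only point where freeness of $v$ in both graphs does real work (for a gluing at a non-free vertex the conclusion is false --- e.g.\ gluing two paths at their centers --- so no soft argument of the kind you do carry out can supply it).

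The gap is essential rather than cosmetic because, in the graded setting, the unproven claim is equivalent to part (2) of the theorem: a linear form is a nonzerodivisor on a graded module exactly when the quotient's Hilbert series equals $(1-t)$ times the module's, and since $\hs(\tilde S/(J_{G_1}+J_{G_2}),t)=\hs(S_1/J_{G_1},t)\,\hs(S_2/J_{G_2},t)$ while $S/J_G\cong\tilde S/(J_{G_1}+J_{G_2}+(z_1,z_2))$, the regular-sequence property holds if and only if formula (2) does. So, as written, the proposal reduces the theorem to a statement equivalent to its second half and stops. Two smaller unjustified points in the portion you do sketch: the assertion that the associated primes of the disjoint-union quotient are the sums $P_{T_1}(G_1)+P_{T_2}(G_2)$ needs these sums to be prime, which over an arbitrary field $\K$ requires geometric integrality of the $S_i/P_{T_i}$ (true for these determinantal-type primes, but it must be said); and ``annihilator zero forces projective dimension at most $1$'' cannot be Auslander--Buchsbaum, since $S_i/J_{G_i}$ is not finitely generated over $\K[x_v,y_v]$ --- one needs the bounded-below graded minimal resolution argument instead. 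A workable way to finish along your own lines: first prove the claim on each prime piece $(S_1/P_{T_1})\otimes_{\K}(S_2/P_{T_2})$, which is Cohen--Macaulay, by checking that $z_1,z_2$ drops its dimension by exactly two (a direct fiber count for pairs of rank-one matrices sharing the column $(x_v,y_v)$, using $v\notin T_i$), so that they form part of a homogeneous system of parameters; then descend to the intersection $J_{G_1}+J_{G_2}=\bigcap_{T_1,T_2}\left(P_{T_1}+P_{T_2}\right)$ via Mayer--Vietoris short exact sequences --- that descent is where the remaining, genuinely combinatorial, care is needed.
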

	
	Another useful operation is the join of two graphs. Let $H$ and $H'$ be two graphs. The \emph{join} of $H$ and $H'$, denoted by $H*H'$, is the graph on the vertex set $V(H*H')=V(H)\cup V(H')$ with edge set $E(H*H')=E(H)\cup E(H')\cup \{ij\mid i\in V(H)\text{ and }j\in V(H')\}$. The behavior of Hilbert series \cite[Theorem 4.13]{KS2019-Hilbert-series-of-BEI} and the regularity  \cite[Theorem 2.1]{MK2018-BEI-of-regularity-3} under the join of two graphs is given as follows.
	
    \begin{theorem}\label{thm: HS and reg of join of graphs}
		Let $H$ and $H'$ be two graphs on $[p]$ and $[q]$, respectively, and $G=H*H'$ be the join of $H$ and $H'$. Suppose $S_H=\K[x_i,y_i:i\in V(H)]$, $S_{H'}=\K[w_j,z_j:j\in V(H')]$ and $S=\K[x_i,y_i,w_j, z_j\mid i\in V(H),j\in V(H')]$. Then the following statements hold:
        \begin{enumerate}
            \item \label{thm: HS of join of graphs} $\hs(S/J_G,t)=\hs(S_H/J_H,t)+\hs(S_{H'}/J_{H'},t)+\frac{(p+q-1)t+1}{(1-t)^{p+q+1}}-\frac{(p-1)t+1}{(1-t)^{p+1}}-\frac{(q-1)t+1}{(1-t)^{q+1}}.$

            \item \label{thm: reg of join of graphs} If at least one of the graphs $H$ and $H'$ is not complete, then
            \[\reg(S/J_{G})= \max \{\reg(S/J_{H}),\reg(S/J_{H'}), 2\}.\]
        \end{enumerate}
	\end{theorem}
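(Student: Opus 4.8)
The plan is to prove both statements simultaneously from a single structural comparison of $J_G$ with the binomial edge ideal of the complete graph $K_{p+q}$ on all $p+q$ vertices. Since every edge of $G=H*H'$ is an edge of $K_{p+q}$, we have $J_G\subseteq J_{K_{p+q}}$, which yields the graded short exact sequence
\[
0 \longrightarrow J_{K_{p+q}}/J_G \longrightarrow S/J_G \longrightarrow S/J_{K_{p+q}} \longrightarrow 0.
\]
Because $J_{K_{p+q}}$ is the ideal of $2\times 2$ minors of a generic $2\times(p+q)$ matrix, $S/J_{K_{p+q}}$ is a determinantal ring whose Hilbert series is classically $\frac{(p+q-1)t+1}{(1-t)^{p+q+1}}$ and whose resolution is linear, so $\reg(S/J_{K_{p+q}})=1$ (cf. \cite{ConcaHerzog1994}); the analogous facts hold for $K_p$ over $S_H$ and $K_q$ over $S_{H'}$. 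Everything thus reduces to understanding the kernel $J_{K_{p+q}}/J_G$.

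The heart of the argument is to establish, as graded modules, the isomorphism $J_{K_{p+q}}/J_G\cong (J_{K_p}/J_H)\oplus(J_{K_q}/J_{H'})$. Note that $J_{K_{p+q}}$ is obtained from $J_G$ by adjoining the binomials $\beta_{ii'}=x_iy_{i'}-x_{i'}y_i$ for the \emph{non}-edges $ii'$ of $H$ and $\gamma_{jj'}=w_jz_{j'}-w_{j'}z_j$ for the non-edges $jj'$ of $H'$, since all cross pairs are already edges of $G$. Let $N_H,N_{H'}$ be the $S/J_G$-submodules of $J_{K_{p+q}}/J_G$ generated by the classes of the $\beta$'s and of the $\gamma$'s. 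The key computation uses the cross relations $x_iz_j\equiv w_jy_i \pmod{J_G}$: for $j\in V(H')$ one gets $z_j\beta_{ii'}\equiv w_jy_iy_{i'}-w_jy_iy_{i'}=0$ and, symmetrically, $w_j\beta_{ii'}\equiv 0$, so every variable of $S_{H'}$ annihilates $N_H$. Hence the action factors through $S/(J_G+(w_j,z_j))=S_H/J_H$, making $N_H$ an $S_H/J_H$-module generated by the $\beta$'s, i.e. the image of a surjection $\phi\colon J_{K_p}/J_H\to N_H$, $\beta\mapsto\beta$. I would then show $\phi$ is an isomorphism by splitting it with the retraction $\pi\colon S/J_G\to S_H/J_H$ induced by setting all $H'$-variables to zero (well defined as $J_G\subseteq J_H+(w_j,z_j)$), which satisfies $\pi\circ\phi=\mathrm{id}$. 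Finally $N_H$ is annihilated by all $H'$-variables and $N_{H'}$ by all $H$-variables, so any homogeneous element of $N_H\cap N_{H'}$ is killed by the entire maximal ideal; since $J_G$ is radical and $\dim(S/J_G)\geq 1$, such an element of positive degree must vanish, giving $N_H\cap N_{H'}=0$ and the direct sum.

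Part (1) then follows from additivity of Hilbert series on the sequence and the direct sum: $\hs(S/J_G,t)=\hs(S/J_{K_{p+q}},t)+\hs(J_{K_p}/J_H,t)+\hs(J_{K_q}/J_{H'},t)$, and substituting $\hs(J_{K_p}/J_H,t)=\hs(S_H/J_H,t)-\hs(S_H/J_{K_p},t)$ (and its $H'$-analogue) together with the determinantal expressions reproduces the stated formula exactly. For Part (2), the exact sequence gives $\reg(S/J_G)\le\max\{\reg(J_{K_{p+q}}/J_G),\reg(S/J_{K_{p+q}})\}$, where $\reg(J_{K_{p+q}}/J_G)=\max\{\reg(N_H),\reg(N_{H'})\}$. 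Since $N_H$ is annihilated by the $2q$ variables of $S_{H'}$, a change-of-rings argument gives $\reg_S(N_H)=\reg_{S_H}(J_{K_p}/J_H)$, and the sequence $0\to J_{K_p}/J_H\to S_H/J_H\to S_H/J_{K_p}\to 0$ with $\reg(S_H/J_{K_p})=1$ yields $\reg(N_H)\le\max\{\reg(S_H/J_H),2\}$; hence $\reg(S/J_G)\le\max\{\reg(S/J_H),\reg(S/J_{H'}),2\}$. For the reverse inequality, $H$ and $H'$ are induced subgraphs of $G$, so monotonicity of regularity under induced subgraphs gives $\reg(S/J_G)\ge\max\{\reg(S/J_H),\reg(S/J_{H'})\}$; and if, say, $H$ is not complete, two non-adjacent $a,b\in V(H)$ and any $c\in V(H')$ induce a path $a-c-b=P_3$ in $G$, forcing $\reg(S/J_G)\ge\reg(S/J_{P_3})=2$. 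Combining the bounds gives equality.

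The main obstacle is the structural isomorphism $J_{K_{p+q}}/J_G\cong(J_{K_p}/J_H)\oplus(J_{K_q}/J_{H'})$: the annihilation computation is short, but proving $\phi$ is \emph{injective} (not merely surjective) requires the splitting $\pi$, and directness of the sum relies on reducedness of $J_G$. A secondary technical point is the identity $\reg_S(N_H)=\reg_{S_H}(J_{K_p}/J_H)$, i.e. that adjoining variables that act as zero leaves regularity unchanged; I would justify it through the invariance of the local cohomology modules $H^i_{\mathfrak{m}}(-)$ along the support of the module.
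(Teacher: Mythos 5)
Your proof appears correct, but it takes a genuinely different route from the paper, which in fact offers no proof at all: the theorem is recalled from the literature, part (1) being cited from \cite[Theorem 4.13]{KS2019-Hilbert-series-of-BEI} and part (2) from \cite[Theorem 2.1]{MK2018-BEI-of-regularity-3}, so your argument is a self-contained substitute rather than a parallel of anything in the text. Its core, the graded isomorphism $J_{K_{p+q}}/J_G\cong (J_{K_p}/J_H)\oplus(J_{K_q}/J_{H'})$, holds up under scrutiny: the annihilation computation $z_j\beta_{ii'}\equiv w_j\beta_{ii'}\equiv 0 \pmod{J_G}$ is correct because both $ij$ and $i'j$ are cross edges of the join; the retraction $\pi$ (killing the $H'$-variables) is well defined since $J_G\subseteq J_H+(w_j,z_j\mid j\in V(H'))$ and does split $\phi$, giving injectivity; and the directness $N_H\cap N_{H'}=0$ is sound, since $J_G$ is radical (immediate from the primary decomposition recalled in Section 2), so $\mathfrak{m}\in\mathrm{Ass}(S/J_G)=\mathrm{Min}(J_G)$ would force $J_G=\mathfrak{m}$, contradicting $\dim(S/J_G)\geq 1$. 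Part (1) is then bookkeeping with the determinantal Hilbert series, and part (2) follows from standard short-exact-sequence regularity bounds, the (correct) fact that regularity is unchanged when a module is viewed over a larger polynomial ring whose extra variables annihilate it, plus two external inputs you should cite precisely rather than invoke namelessly: Matsuda--Murai monotonicity of regularity under induced subgraphs, and $\reg(S_{P_3}/J_{P_3})=2$ for the induced path $a$--$c$--$b$ produced by a non-edge of $H$ and a vertex of $H'$ (elementary, as $J_{P_3}$ is a complete intersection of two quadrics). What your approach buys: a single structural decomposition that proves both statements at once and explains why the correction terms in the Hilbert series formula are exactly $\hs(S_H/J_H,t)-\hs(S_H/J_{K_p},t)$ and its $H'$-analogue; what the paper's citations buy: brevity, at the cost of importing the two parts as unrelated black boxes.
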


	The characterization of graphs $G$ for which $\reg(S/J_G)=2$ is known in the literature, and is given as follows.
	\begin{theorem}\cite[Theorem 3.2]{MK2018-BEI-of-regularity-3}\label{thm: BEI with reg equal to 3}
		Let $G$ be a non-complete graph with $n$ vertices and no isolated vertices. Then $\reg(S/J_G)=2$ if and only if either 
		\begin{enumerate}
			\item $G=K_r\sqcup K_s$ with $r,s\geq 2$ and $r+s=n$, or 
			\item $G=H_1*H_2$ where $H_i$ is a graph on $n_i<n$ vertices such that $n_1+n_2=n$ and $\reg(S/J_{H_i})\leq2$ for $i=1,2$.
		\end{enumerate}
	\end{theorem}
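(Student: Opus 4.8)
The statement is an equivalence whose content is that, for a non-complete graph $G$ with no isolated vertices, $\reg(S/J_G)=2$ is the smallest possible value and is attained in exactly two structural situations. The starting observation is that $J_G$ has a linear resolution if and only if $G$ is complete (\cite[Theorem 2.1]{MadaniKiani2012}), so for our non-complete $G$ we automatically have $\reg(S/J_G)\ge 2$; the theorem therefore asks precisely when equality holds. The plan is to prove the two implications separately, and the whole argument rests on three ingredients: a lower bound for regularity in terms of induced paths, additivity of regularity over connected components, and the join formula already recorded in part (2) of \Cref{thm: HS and reg of join of graphs}.

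For the forward implication the first step is to pin down the combinatorics of $G$. The crucial input is the Matsuda--Murai lower bound $\reg(S/J_G)\ge \ell(G)$, where $\ell(G)$ is the number of edges of a longest induced path in $G$. If $\reg(S/J_G)=2$, this forces $\ell(G)\le 2$, so $G$ contains no induced path on four vertices; by the classical characterization of cographs, $G$ is then a cograph. I would now split on connectivity. If $G$ is disconnected, write $G=G_1\sqcup\cdots\sqcup G_c$ with $c\ge 2$. Because the components live in disjoint sets of variables, the minimal free resolution of $S/J_G$ is the tensor product of those of the factors, so regularity is additive: $\reg(S/J_G)=\sum_i\reg(S_i/J_{G_i})$. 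Since $G$ has no isolated vertex, each summand is at least $1$, so the value $2$ forces $c=2$ with $\reg(S_i/J_{G_i})=1$ for both $i$; by \cite{MadaniKiani2012} each $G_i$ is complete, which is case (1). If instead $G$ is connected and non-complete, the cograph dichotomy (a connected cograph on at least two vertices is a join) gives $G=H_1*H_2$. As $G$ is non-complete, the two factors cannot both be complete, so part (2) of \Cref{thm: HS and reg of join of graphs} applies and yields $\reg(S/J_G)=\max\{\reg(S/J_{H_1}),\reg(S/J_{H_2}),2\}$. Equality with $2$ forces $\reg(S/J_{H_i})\le 2$ for $i=1,2$, which is case (2).

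The converse is a short computation with the same tools. In case (1), additivity over the two components gives $\reg(S/J_G)=\reg(S/J_{K_r})+\reg(S/J_{K_s})=1+1=2$, using that complete graphs have regularity $1$. In case (2), since $G=H_1*H_2$ is non-complete at least one factor is non-complete, so the join formula gives $\reg(S/J_G)=\max\{\reg(S/J_{H_1}),\reg(S/J_{H_2}),2\}$; the hypothesis $\reg(S/J_{H_i})\le 2$ then makes this maximum exactly $2$.

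The step I expect to carry the real weight is the passage from the single numerical hypothesis $\reg(S/J_G)=2$ to the structural conclusion that $G$ is a cograph: this is exactly where the induced-path lower bound is indispensable, since nothing else in the toolkit constrains $G$ from regularity alone. Once $G$ is known to be a cograph, the remainder is bookkeeping built on the join formula of \Cref{thm: HS and reg of join of graphs} and additivity over components. The one place that genuinely uses the non-completeness of $G$, and which I would be careful to flag, is the verification in both directions that at least one join factor $H_i$ is non-complete, so that the join formula---rather than the degenerate complete-graph case---is the one being applied.
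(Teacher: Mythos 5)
The paper offers no proof of this statement—it is quoted directly from \cite[Theorem 3.2]{MK2018-BEI-of-regularity-3}—so the only comparison available is with the argument in that source, and your proposal is correct and follows essentially the same route: the Matsuda--Murai induced-path lower bound forces $G$ to be $P_4$-free, hence a cograph; additivity of regularity over connected components (each contributing at least $1$ since there are no isolated vertices) settles the disconnected case as (1); the join decomposition of a connected cograph together with \Cref{thm: HS and reg of join of graphs}\eqref{thm: reg of join of graphs} settles the connected case as (2); and the converse is the same two computations. Your explicit check that at least one join factor is non-complete before invoking the join formula is precisely the hypothesis that must be verified in both directions, so no gap remains.
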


	\section{Main Result}\label{sec: main}
	This section contains the main result of the article. We begin with a lemma that is the backbone of our construction of graphs with prescribed Hilbert coefficients. In this lemma, we show that the Hilbert coefficients of the intersection of two ideals, whose heights differ in a specific manner, agree with those of the ideal of smaller height. 
	
	\begin{lemma}\label{lem: Hilbert coef. for intersection}
		Let $I,J\subseteq R=\K[x_1,\dots , x_n]$ be ideals and $i\geq 0$ an integer. If $\height(I)>i+\height(J)$, then 
        \[ 
        e_i(R/(I\cap J))=e_i(R/J).
        \]
	\end{lemma}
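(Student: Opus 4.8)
The plan is to compare Hilbert series through the Mayer--Vietoris sequence and then read off the coefficient $e_i$ as a derivative of the numerator of the reduced Hilbert series at $t=1$, exactly as in the definition recalled above. First I would write down the short exact sequence of graded modules
\[
0 \longrightarrow R/(I\cap J) \longrightarrow R/I \oplus R/J \longrightarrow R/(I+J)\longrightarrow 0,
\]
whose exactness is immediate since $I\cap J$ is the kernel of $R\to R/I\oplus R/J$ and $I+J$ is the image. Additivity of the Hilbert series on short exact sequences of graded modules then gives
\[
\hs(R/(I\cap J),t)=\hs(R/J,t)+\hs(R/I,t)-\hs(R/(I+J),t).
\]

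Next I would convert the height hypothesis into a dimension gap. Setting $d=\dim(R/J)=n-\height(J)$, the assumption $\height(I)>i+\height(J)$ yields $\dim(R/I)\le d-(i+1)$, and since $I+J\supseteq I$ forces $\height(I+J)\ge\height(I)$, we also get $\dim(R/(I+J))\le d-(i+1)$. Moreover, because $V(I\cap J)=V(I)\cup V(J)$ and $\dim(R/I)<d$, the ring $R/(I\cap J)$ has dimension exactly $d$, so that $e_i(R/(I\cap J))$ and $e_i(R/J)$ are coefficients attached to the same denominator $(1-t)^d$. Writing each summand in its reduced Hilbert--Serre form and clearing to the common denominator $(1-t)^d$, I obtain $\hs(R/(I\cap J),t)=\tilde Q(t)/(1-t)^d$ with
\[
\tilde Q(t)=Q_J(t)+(1-t)^{\,d-\dim(R/I)}Q_I(t)-(1-t)^{\,d-\dim(R/(I+J))}Q_{I+J}(t),
\]
where $Q_J,Q_I,Q_{I+J}$ denote the respective reduced numerators.

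The decisive point is that both correction terms carry a factor $(1-t)^{i+1}$, since $d-\dim(R/I)\ge i+1$ and $d-\dim(R/(I+J))\ge i+1$. Because $\tilde Q(1)=Q_J(1)\ge 1\ne 0$, the expression $\tilde Q(t)/(1-t)^d$ is already the reduced form, so $e_i(R/(I\cap J))=\tilde Q^{(i)}(1)/i!$. A polynomial divisible by $(1-t)^{i+1}$ has all derivatives of order at most $i$ vanishing at $t=1$ (by the Leibniz rule), hence $\tilde Q^{(i)}(1)=Q_J^{(i)}(1)$, and dividing by $i!$ gives $e_i(R/(I\cap J))=e_i(R/J)$.

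I expect the only genuinely delicate steps to be the two bookkeeping verifications: that $\dim(R/(I\cap J))=\dim(R/J)$, so the two coefficients sit in the same slot, and that $\tilde Q$ stays coprime to $(1-t)$, which is what licenses reading $e_i$ directly off $\tilde Q^{(i)}(1)/i!$. Neither is deep---both follow from the height gap together with $Q_J(1)\ge 1$---but they are what make the clean vanishing-of-low-order-derivatives conclusion legitimate; the height hypothesis enters precisely through the exponent $i+1$ appearing in the two correction terms.
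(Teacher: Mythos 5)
Your proposal is correct and follows essentially the same route as the paper's proof: the short exact sequence $0 \to R/(I\cap J) \to R/I \oplus R/J \to R/(I+J) \to 0$, additivity of Hilbert series, clearing to the common denominator $(1-t)^{n-\height(J)}$, and observing that the correction terms are divisible by $(1-t)^{i+1}$ so their derivatives up to order $i$ vanish at $t=1$. Your phrasing in terms of dimension gaps is the same computation as the paper's height differences (since $\dim(R/I)=n-\height(I)$ in a polynomial ring), and your extra verifications (reducedness via $\tilde Q(1)=Q_J(1)\neq 0$, and the dimension of $R/(I\cap J)$) match what the paper checks.
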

	\begin{proof}
		Consider the short exact sequence 
		\[
		0\longrightarrow{\frac{R}{I\cap J}}\longrightarrow \frac{R}{I}\oplus \frac{R}{J}\longrightarrow \frac{S}{I+J}\longrightarrow 0.
		\]
		By the additivity of the Hilbert series, we obtain 
		\[
		\hs(R/(I\cap J),t)=\hs(R/I,t)+\hs(R/J,t)-\hs(R/(I+J),t).
		\]
		Thus, there exist polynomials $P_{1}(t), P_2(t),Q(t)\in \mathbb Z[t]$ such that 
        \[
        \hs(R/I,t)=\frac{P_{1}(t)}{(1-t)^{n-\height(I)}}, \ \hs(R/J,t)=\frac{P_{2}(t)}{(1-t)^{n-\height(J)}}, \ \hs(R/(I+J),t)=\frac{Q(t)}{(1-t)^{n-\height(I+J)}}
        \]
        with $P_{1}(1), P_{2}(1), Q(1)\neq 0$. Since $\height(I+J)\geq \height(I)>\height(J)+i$, we can write the Hilbert series of $R/(I\cap J)$ as 
		\begin{equation}\label{eqn 1}
			\hs(R/(I\cap J),t)=\frac{(1-t)^{\height(I)-\height (J)}P_1(t)+P_2(t)-(1-t)^{\height(I+J)-\height(J)}Q(t)}{(1-t)^{n-\height(J)}}.
		\end{equation}
		Set $P(t)=(1-t)^{\height(I)-\height (J)}P_1(t)+P_2(t)-(1-t)^{\height(I+J)-\height(J)}Q(t)$. Then $P(1)=P_2(1)\neq 0$, so the \Cref{eqn 1} is a reduced Hilbert series of $R/(I\cap J)$. Hence, $e_i(S/(I\cap J))=\frac{P^{(i)}(1)}{i!} $. Note that $\height(I)-\height(J)>i$ and $\height(I+J)-\height(J)>i$, and so, the first and third terms in $P(t)$ vanish after taking $i$-th derivatives at $t=1$. Therefore, we have $P^{(i)}(1)=P_2^{(i)}(1)$, which implies 
        \[
        e_i(S/(I\cap J))=\frac{P^{(i)}(1)}{i!}=\frac{P^{(i)}_2(1)}{i!}=e_i(S/J),
        \]
        as desired.
	\end{proof}
    
    We now arrive at the main results of this paper. For the remainder of the article, let $G$ be a simple graph on the vertex set $[n]$, and let $J_G$ denote the binomial edge ideal of $G$ in the polynomial ring $S=\K[x_1, \dots, x_n, y_1, \dots, y_n]$. In the next result, we establish a combinatorial condition on a vertex that guarantees the Hilbert coefficients remain unchanged after deleting that vertex. This reduction method plays a crucial role in our construction of graphs with prescribed Hilbert coefficients. Before stating the theorem, we make the following definition.

    \begin{definition}\label{def: FCD}
        Let $v \in V(G)$, and let $C$ be a maximal clique containing $v$. We call $C$ a \emph{free clique} of $v$ if either $G = C$, or every vertex of $C$ other than $v$ is a free vertex. The \emph{free clique degree} of $v$, denoted by $\fcdeg_G(v)$, is the number of distinct free cliques of $v$.
    \end{definition}

	\begin{theorem}\label{thm: delete vertex}
		Let $G$ be a graph and $v\in V(G)$ be such that $\fcdeg_G(v)\geq i+3$ for some  $i\geq 0$. Then 
		\[
		e_i(S/J_G)=e_i(S/J_{G\setminus{v}}).
		\]
	\end{theorem}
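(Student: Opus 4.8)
The plan is to split the primary decomposition $J_G=\bigcap_{T\in\mathcal{C}(G)}P_T(G)$ according to whether $v\in T$, and then feed the resulting intersection into \Cref{lem: Hilbert coef. for intersection}. Concretely, I would write
\[
J_G=I_1\cap I_2,\qquad I_1=\bigcap_{\substack{T\in\mathcal{C}(G)\\ v\notin T}}P_T(G),\qquad I_2=\bigcap_{\substack{T\in\mathcal{C}(G)\\ v\in T}}P_T(G),
\]
and prove two things: that $I_2=(x_v,y_v)+J_{G\setminus v}$, and that the height gap $\height(I_1)>i+\height(I_2)$ holds. Granting these, the lemma applied with $I=I_1$ and $J=I_2$ gives $e_i(S/J_G)=e_i(S/I_2)$, and since killing the two variables $x_v,y_v$ yields $S/I_2\cong S'/J_{G\setminus v}$ (where $S'$ is the polynomial ring in the $x_j,y_j$ with $j\neq v$), this equals $e_i(S/J_{G\setminus v})$, as claimed.

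The combinatorial engine is the simpliciality of free cliques. Writing $k=\fcdeg_G(v)\ge i+3$, each free clique has the form $C_j=\{v\}\cup F_j$ with every vertex of $F_j$ free (note $k\ge 3$ forces $G$ incomplete, so the alternative $G=C$ in \Cref{def: FCD} cannot occur). First I would record that every vertex of $\bigcup_jF_j$ is simplicial in $G$ and stays simplicial after deleting any other vertices; hence such a vertex can never lie in a set with the cut point property, so every $T\in\mathcal{C}(G)$ is disjoint from $\bigcup_jF_j$. This has two consequences. For $v\notin T\in\mathcal{C}(G)$ all $k$ free cliques survive in $G\setminus T$ and are joined only through $v$, so deleting $v$ separates them and
\[
c_{G\setminus v}(T)=c_G(T\cup\{v\})\ge c_G(T)+k-1.
\]
Moreover, using that $v$ is thereby a cut point of $G\setminus T'$ for every $T'\in\mathcal{C}(G\setminus v)$, the assignment $T\mapsto T\setminus v$ is a bijection between $\{T\in\mathcal{C}(G):v\in T\}$ and $\mathcal{C}(G\setminus v)$. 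Since $(G\setminus v)\setminus T'$ and $G\setminus(T'\cup\{v\})$ have the same components, $P_{T'\cup\{v\}}(G)=(x_v,y_v)+P_{T'}(G\setminus v)$, and because each $P_{T'}(G\setminus v)$ is generated by polynomials free of $x_v,y_v$, the intersection distributes over the sum to give $I_2=(x_v,y_v)+J_{G\setminus v}$; in particular $\height(I_2)=2+\height(J_{G\setminus v})$.

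For the height gap I would fix any $T\in\mathcal{C}(G)$ with $v\notin T$ and combine the formula $\height(P_T(G))=n+|T|-c_G(T)$ with the bound $\height(J_{G\setminus v})\le (n-1)+|T|-c_{G\setminus v}(T)$ (obtained by plugging the set $T$ into the minimum that computes the height of $J_{G\setminus v}$). After cancellation, the required inequality $n+|T|-c_G(T)>i+\height(I_2)$ collapses to
\[
c_{G\setminus v}(T)-c_G(T)\ge i+2,
\]
which is exactly the component bound above together with $k-1\ge i+2$. As the family of admissible $T$ is nonempty (it contains $\emptyset$), taking the minimum yields $\height(I_1)>i+\height(I_2)$, and \Cref{lem: Hilbert coef. for intersection} closes the argument.

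The step I expect to be the real obstacle is the exact identification $I_2=(x_v,y_v)+J_{G\setminus v}$: both directions of the cut point bijection must be checked, and this is precisely where simpliciality of the free-clique vertices and the hypothesis $k\ge 2$ are indispensable (for a simplicial but non-cut vertex, such as a vertex of $K_3$, the identity fails and so does the reduction). Once this distributivity is in hand, the remainder is height arithmetic driven by the single inequality $c_{G\setminus v}(T)-c_G(T)\ge k-1$, and the threshold $\fcdeg_G(v)\ge i+3$ appears naturally as the exact condition making the gap exceed $i+2=\height(I_2)-\height(J_{G\setminus v})+i$.
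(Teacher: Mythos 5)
Your proposal is correct, and it shares the paper's overall skeleton---write $J_G$ as an intersection of two ideals, one of which is $\langle x_v,y_v\rangle + J_{G\setminus v}$, prove a height gap between the two, and finish with \Cref{lem: Hilbert coef. for intersection}---but it builds the decomposition by a genuinely different route. The paper simply invokes Ohtani's lemma, $J_G=J_{G_v}\cap\left(J_{G\setminus v}+\langle x_v,y_v\rangle\right)$, where $G_v$ is $G$ with the neighborhood of $v$ completed; the second factor thus comes for free, and all the combinatorial work goes into the single estimate $\height(J_{G_v})\geq\height(J_{G\setminus v})+k$, which is proved with exactly the ingredients you use (cut sets avoid the free-clique vertices, and adjoining $v$ to a cut set creates at least $k-1$ new components). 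You instead split the minimal-prime intersection $\bigcap_{T\in\mathcal{C}(G)}P_T(G)$ according to whether $v\in T$, which obliges you to prove two statements the paper never needs: the bijection $T\mapsto T\setminus\{v\}$ between $\{T\in\mathcal{C}(G):v\in T\}$ and $\mathcal{C}(G\setminus v)$ (the reverse direction is where $k\geq 2$, i.e.\ $v$ being an honest cut vertex of every $G\setminus T'$, enters), and the distributivity $\bigcap_{T'}\left(\langle x_v,y_v\rangle+P_{T'}(G\setminus v)\right)=\langle x_v,y_v\rangle+J_{G\setminus v}$, valid because each $P_{T'}(G\setminus v)$ is generated in the subring without $x_v,y_v$. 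Both verifications are sound as you sketch them: free vertices remain simplicial in induced subgraphs, and simplicial vertices are never cut points, so no set with the cut point property meets $\bigcup_j F_j$. Note also that your first factor $I_1$ contains $J_{G_v}$, since $P_T(G)=P_T(G_v)$ whenever $v\notin T$; hence the lower bound you need on $\height(I_1)$ is formally weaker than the paper's bound on $\height(J_{G_v})$, though both rest on the same component count $c_{G\setminus v}(T)\geq c_G(T)+k-1$. What the paper's route buys is brevity, since Ohtani's decomposition is off the shelf; what yours buys is a self-contained argument with an explicit prime-by-prime picture of where the hypothesis $\fcdeg_G(v)\geq i+3$ acts, at the price of the identification of $I_2$ that you yourself correctly single out as the crux.
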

    \begin{proof}
        Let $G$ be a graph on $[n]$ and $v\in V(G)$. Fix $i\geq 0$ and assume that $\fcdeg_G(v)=k\geq i+3$. By \cite[Lemma 4.8]{GraphsandIdealsGeneratedbysome2-minor-Ohtani-Masahiro} we have the following decomposition 
        \begin{equation}\label{eqn: decomposition of J_G}
                    J_G=J_{G_v}\bigcap \left (J_{G\setminus{v}}+ \left \langle x_v,y_v \right \rangle \right ),
        \end{equation}
        where $G_v$ is the graph obtained from $G$ by adding edges between all pairs of non-adjacent neighbors of $v$, that is,
        \[
        V(G_v)=V(G)\ \ \text{ and }\ \ E(G_v)=E(G)\cup \{ij \mid i,j\in N_G(v)\}.
        \]
        Let $W\subseteq V(G_v)$ be such that $W$ has the cut point property, and 
        \[
        \height(J_{G_v})=\height(P_W(G_v))=n+|W|-c_{G_v}(W).
        \]
        Let $H_1, \dots , H_k$ be the free cliques of $v$ in $G$ containing the vertex $v$, where each $H_j$ is a complete graph $K_{r_j}$ for some $r_j\geq 2$. Let $L$ be the induced subgraph of $G_v$ on the vertex set $\cup_{j=1}^kV(H_j)$. Then $L$ is a complete graph, and moreover, any vertex $u\in V(L)$ in the graph $G_v$ is a free vertex. Consequently, we have $V(H_j)\cap W=\emptyset$ for all $1\leq j\leq k$. Ser $r=c_{G_v}(W)$, and assume that 
        \[
        G_v\setminus W= G_1\sqcup \cdots \sqcup G_{r}.
        \]
        Then $L \subseteq G_j$ for some $1\leq j\leq r$. Without any loss of generality, assume that $j=1$. Now if $G_j'=G_j\setminus \{ab \mid a,b\in N_G(v), ab\in E(G_v)\setminus E(G)\}$, then we have
        \[
        G\setminus W= G_1' \sqcup \cdots \sqcup G_r',
        \]
        and this implies that $c_G(W)\geq r$. Set $T=W\cup \{v\}$. Note that $L' \coloneqq L\setminus \{ ab \mid a,b\in N_G(v), ab\in E(G_v)\setminus E(G)\}\subseteq G_1'$, it follows that $\fcdeg_{G_1'}(v)\geq k$. Therefore,
        \[
        G\setminus T= (H_1\setminus v)\sqcup \cdots \sqcup (H_k\setminus v)\sqcup (G_1'\setminus V(\mathcal{Z})) \sqcup G_2' \sqcup \cdots \sqcup G_r'.
        \]
        It may happen that $G_1'\setminus V(\mathcal{Z}) =\emptyset$. In any case, the above yields $c_{G}(T)\geq k+r-1$. In other words, $c_{G}(W\cup \{v\})\geq c_{G_v}(W)+k-1$. Note that $J_{G\setminus{v}}\subseteq P_{T\setminus v}(G\setminus v)$, and hence $\height\left (J_{G\setminus{v}}\right ) \leq \height\left (P_{T\setminus v}(G\setminus v)\right)$. Now, 
        \begin{alignat*}{3}
            \height\left (P_T(G)\right ) & = n+|T|-c_G(T) \qquad && \text{}\\
            & = n+|W|+1-c_G(W\cup \{v\}) \qquad && \text{[Since $T=W\cup \{v\}$]}\\
            &\leq n+|W|+1 -c_{G_v}(W)-k+1  \quad && \text{[Since $c_{G}(W\cup \{v\})\geq c_{G_v}(W)+k-1$]}\\
            & =  n+|W|-c_{G_v}(W)-k+2 \qquad && \\
            & =\height(P_W(G_v))-k+2 \qquad && \\
            & =\height(J_{G_v})-k+2. \qquad &&
        \end{alignat*}

        On the other hand, since $P_T(G)=\left\langle x_v,y_v \right \rangle+ P_{T\setminus v}(G\setminus v)$, we have $\height\left (P_T(G)\right )= 2+ \height\left (P_{T\setminus v}(G\setminus v)\right)$. Thus, the above yields $\height\left (J_{G\setminus{v}}\right ) \leq \height(J_{G_v})-k$. Now, 
        \begin{alignat*}{3}
            \height\left (J_{G_v}\right )& \geq \height\left (J_{G\setminus{v}}\right ) +k \qquad && \text{}\\
            &\geq \height\left (J_{G\setminus{v}}\right ) +i+3 \qquad && \text{[Since $k\geq i+3$]}\\
            &= \height\left (J_{G\setminus{v}}+ \left \langle x_v,y_v \right \rangle \right ) +i+1 \qquad && \text{}\\
            &> \height\left (J_{G\setminus{v}}+ \left \langle x_v,y_v \right \rangle \right ) +i. \qquad && \text{}\\
        \end{alignat*}
        Finally, applying \Cref{lem: Hilbert coef. for intersection}  together with \Cref{eqn: decomposition of J_G}, we obtain 
        \[
        e_i\left (S/J_G \right )=e_i\left (\frac{S}{{J_{G\setminus{v}}+\left\langle x_v,y_v\right \rangle}}\right )= e_i\left (\frac{S}{{J_{G\setminus{v}}}}\right ),
        \]
        and this completes the proof.
    \end{proof}

    \begin{example}\label{example}
        In \Cref{picture: free clique degree}, the vertex $w$ has free-clique degree $\fcdeg_G(w)=4$. Thus, it follows from \Cref{thm: delete vertex} that $e_0(S/J_G)=e_0(S/J_{G\setminus w})$ and $e_1(S/J_G)=e_1(S/J_{G\setminus w})$. Moreover, $\fcdeg_{G\setminus w}(v)=3$ and thus $e_0(S/J_G)=e_0(S/J_{G\setminus w})=e_0(S/J_{G\setminus \{v,w\}})$.

\begin{figure}[h!]
\centering

\begin{minipage}{0.45\textwidth}
\centering
\begin{tikzpicture}[scale=0.5, every node/.style={font=\small},
    line cap=round, line join=round, line width=0.8pt]
    \tikzstyle{vertex}=[circle, fill=black, minimum size=4pt, inner sep=0pt]

    \node[vertex, label=below:] (v1) at (0,0) {};
    \node[vertex, label=below:] (v2) at (2,0) {};
    \node[vertex, label=left:$v$] (v3) at (4,1) {}; 
    \node[vertex, label=above:] (v4) at (0,2) {};
    \node[vertex, label=above:] (v5) at (2,2) {};
    \node[vertex, label=right:$w$] (v6) at (6,1) {};

    \draw (v1)--(v2);
    \draw (v1)--(v4);
    \draw (v2)--(v3);
    \draw (v2)--(v5);
    \draw (v3)--(v6);
    \draw (v4)--(v5);
    \draw (v5)--(v3);

    \node[vertex, label=above:] (w1) at (3.5,2.3) {};
    \node[vertex, label=above:] (w2) at (5,2.3) {};
    \draw (v3)--(w1);
    \draw (v3)--(w2);
    \draw (v6)--(w2);
    \node[vertex, label=below:]  (k1) at (3.3,-0.3) {};
    \node[vertex, label=below:] (k2) at (5,-0.3) {};
    \draw (v3)--(k1);
    \draw (v3)--(k2);
    \draw (k1)--(k2);
    \draw (v6)--(k2);

    \node[vertex, label=right:] (u1) at (8,0.5) {};
    \draw (v6)--(u1);

    \node[vertex, label=right:] (u2) at (8,1.8) {};
    \draw (v6)--(u2);

    \node[vertex, label=below:]  (u3) at (7,-0.3) {};
    \node[vertex, label=below:] (u4) at (8,-0.3) {};
    \draw (v6)--(u3);
    \draw (v6)--(u4);
    \draw (u3)--(u4);

    \node[vertex, label=above:] (u5) at (6.5,3.3) {};
    \node[vertex, label=above:] (u6) at (7.7,3.7) {};
    \node[vertex, label=above:] (u7) at (8.9,3.3) {};
    \draw (v6)--(u5);
    \draw (v6)--(u6);
    \draw (v6)--(u7);
    \draw (u5)--(u6);
    \draw (u5)--(u7);
    \draw (u6)--(u7);
     \draw (v1)--(v5);
      \draw (v2)--(v4);
\end{tikzpicture}
\vspace{3pt}
\caption*{$G$}
\end{minipage}
\begin{minipage}{0.45\textwidth}
\centering
\begin{tikzpicture}[scale=0.5, every node/.style={font=\small},
    line cap=round, line join=round, line width=0.8pt]
    \tikzstyle{vertex}=[circle, fill=black, minimum size=4pt, inner sep=0pt]
    \node[vertex, label=below:] (v1) at (0,0) {};
    \node[vertex, label=below:] (v2) at (2,0) {};
    \node[vertex, label=right:$v$] (v3) at (4,1) {};
    \node[vertex, label=above:] (v4) at (0,2) {};
    \node[vertex, label=above:] (v5) at (2,2) {};
    \draw (v1)--(v2);
    \draw (v1)--(v4);
    \draw (v2)--(v3);
    \draw (v2)--(v5);
    \draw (v4)--(v5);
    \draw (v5)--(v3);
    \node[vertex, label=above:] (w1) at (3.5,2.3) {};
    \node[vertex, label=above:] (w2) at (4.5,2.3) {};
    \draw (v3)--(w1);
    \draw (v3)--(w2);
    \node[vertex, label=below:]  (k1) at (3.3,-0.3) {};
    \node[vertex, label=below:] (k2) at (4.7,-0.3) {};
    \draw (v3)--(k1);
    \draw (v3)--(k2);
    \draw (k1)--(k2);
 \draw (v1)--(v5);
  \draw (v2)--(v4);
    \node[vertex, label=right:] (u1) at (8,0.5) {};
    \node[vertex, label=right:] (u2) at (8,1.8) {};
    \node[vertex, label=below:]  (u3) at (7,-0.3) {};
    \node[vertex, label=below:] (u4) at (8,-0.3) {};
    \draw (u3)--(u4);

    \node[vertex, label=above:] (u5) at (6.5,3.3) {};
    \node[vertex, label=above:] (u6) at (7.7,3.7) {};
    \node[vertex, label=above:] (u7) at (8.9,3.3) {};
    \draw (u5)--(u6);
    \draw (u5)--(u7);
    \draw (u6)--(u7);

\end{tikzpicture}
\vspace{3pt}
\caption*{$G\setminus w$}
\end{minipage}
 \caption{}\label{picture: free clique degree}
\end{figure}
    \end{example}

    The hypothesis that $\fcdeg_G(v)\geq i+3$ in \Cref{thm: delete vertex} cannot, in general,  be weakened to the condition $\cdeg_G(v)\geq i+3$. This is illustrated in the following example. Let $J_G$ be the binomial edge   
    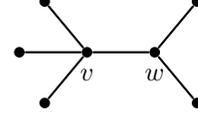
\begin{wrapfigure}{r}{0.35\textwidth}
		\centering
		\begin{tikzpicture}[scale=0.45, every node/.style={font=\small}, line cap=round, line join=round, line width=0.8pt]
        \tikzstyle{vertex}=[circle, fill=black, minimum size=4pt, inner sep=0pt]

        \node[vertex, label=below:$v$] (v) at (0,0) {};
        \node[vertex, label=below:$w$] (w) at (2,0) {};
        \node[vertex, label=below:] (a) at (-2,0) {};
        \node[vertex, label=left:]  (b) at (-1.25,1.5) {};
        \node[vertex, label=left:]  (c) at (-1.25,-1.5) {};
        \node[vertex, label=right:] (d) at (3.25,1.5) {};
        \node[vertex, label=right:] (e) at (3.25,-1.5) {};

        \draw (v)--(w);
        \draw (v)--(a);
        \draw (v)--(b);
        \draw (v)--(c);
        \draw (w)--(d);
        \draw (w)--(e);
    \end{tikzpicture}
    \caption{The graph $B_{3,2}$}\label{FigSharpBothBounds}
	\end{wrapfigure}
    ideal of the graph displayed in \Cref{FigSharpBothBounds}. In this case, it is straightforward to verify that
        \begin{flalign*}
            &\hs(S/J_G,t)=\frac{1+4t+4t^2-4t^3-5t^4+4t^5}{(1-t)^{10}},&& \\
            &\hs(S/J_{G\setminus v},t)=\frac{1+2t+t^2}{(1-t)^{10}},&&\\
            &\hs(S/J_{G\setminus w},t)=\frac{1+2t-2t^3}{(1-t)^{10}}.&&
        \end{flalign*}
    Therefore, $e_0(S/J_G)=4=e_0(S/J_{G\setminus{v}})$, but $e_0(S/J_{G\setminus{w}})=1$.

\section{Hilbert Coefficients and Regularity}\label{sec: application}

    In this section, we present an application of the main result \Cref{thm: delete vertex}. Specifically, we provide constructions of graphs $G$ corresponding to a given pair $(r,s)$, where $\reg(S/J_G)=r$ and $e_i(S/J_G)=s$ for some $i\geq 0$. Of course, the very first step is the case when $r=1$, that is, when $J_G$ has a linear resolution. This was first characterized in \cite[Theorem 2.1]{MadaniKiani2012}, which shows that $\reg(S/J_G)=1$ if and only if $G$ is a complete graph. Note that the Hilbert series of the complete graph $K_n, n\geq 2$ is given by  
	\[
	\hs(S/J_{K_n},t)=\frac{1+(n-1)t}{(1-t)^{n+1}}.
	\]
	Therefore, $e_0(S/J_{K_n})=n$, $e_1(S/J_{K_n})=n-1$, and $e_i(S/J_{K_n})=0$ for all $i\geq 2$.	Thus, in this case, all the Hilbert coefficients are completely determined and characterized.

    \par
	In the rest of the article, we focus on the cases when $\reg(S/J_G)\geq 2$. Before we move on, let us fix some notation. For $s,t\geq 1$, let $B_{s,t}$ be the graph such that
    \begin{align*}
        V(B_{s,t})&=\{u,v,u_i,v_j:1\leq i\leq s\text{ and }1\leq j\leq t\},\\
        E(B_{s,t})&=\{{uv}\}\cup \{uu_i\mid 1\leq i\leq s\}\cup\{vv_j\mid 1\leq j\leq t\}.
    \end{align*}
    We call $B_{s,t}$ the \emph{biclaw} graph with $s$ and $t$ free vertices. We   make repeated use of this graph in the constructions presented in the remainder of the article. The following two propositions address the pairs $(r,s)$ for $r=2,3$ and for any $s=e_i(S/J_G)$, where $i=0,1$.	
	\begin{proposition}\label{prop: r=2or3 e_0 e_1=s>0}
		For any integer $s\geq 1$, the following statements hold:
		\begin{enumerate}
			\item \label{prop: r=2or3 e_0 e_1=s>0 part 1} There is a graph $G$ such that $\reg(S/J_G)=2$ and $e_0(S/J_G)=s$.
			\item \label{prop: r=2or3 e_0 e_1=s>0 part 2} There is a graph $G$ such that $\reg(S/J_G)=3$ and $e_0(S/J_G)=s$.
			\item \label{prop: r=2or3 e_0 e_1=s>0 part 3} There is a graph $G$ such that $\reg(S/J_G)=2$ and $e_1(S/J_G)=s$.
			\item \label{prop: r=2or3 e_0 e_1=s>0 part 4} There is a graph $G$ such that $\reg(S/J_G)=3$ and $e_1(S/J_G)=s$.
		\end{enumerate}
	\end{proposition}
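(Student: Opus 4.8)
The four parts separate into the regularity‑$2$ cases (statements (1) and (3)) and the regularity‑$3$ cases (statements (2) and (4)), and I would attack them with the two bookkeeping principles from \Cref{sec: prelim}. Writing $\hs(S/J_G,t)=Q_G(t)/(1-t)^{\dim(S/J_G)}$ so that $e_i(S/J_G)=Q_G^{(i)}(1)/i!$, the point is how $Q_G$ transforms under the available operations. Under a decomposition $G=G_1\cup_v G_2$, \Cref{thm: HS under decomposition} multiplies the reduced Hilbert series by $(1-t)^2$, so the numerators simply multiply, $Q_G=Q_{G_1}Q_{G_2}$; hence $e_0$ is multiplicative and the Leibniz rule gives $e_1(S/J_G)=e_1(S/J_{G_1})\,e_0(S/J_{G_2})+e_0(S/J_{G_1})\,e_1(S/J_{G_2})$, while \Cref{thm: reg under decomposition} makes the regularity additive. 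For joins I would instead read $e_0,e_1$ directly off \Cref{thm: HS of join of graphs}. My building blocks are $K_m$ (with $e_0=m$, $e_1=m-1$, $\reg=1$), the star $K_{1,n}$ (for which a one‑line Hilbert‑series computation gives $e_0=1$, $e_1=0$ for $n$ large and $\reg=2$ by \Cref{thm: reg of join of graphs}), and the biclaw $B_{n,n}$.

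For the regularity‑$2$ statements I would stay inside the join family. For statement (3) the cleanest choices are the complete bipartite graphs $K_{a,a}=\overline{K_a}*\overline{K_a}$ and the complete split graphs $K_a*\overline{K_a}$: extracting $Q$ from \Cref{thm: HS of join of graphs} and differentiating once at $t=1$ should give $e_1(S/J_{K_{a,a}})=2a-3$ and $e_1(S/J_{K_a*\overline{K_a}})=2a-2$, realizing respectively all odd and all even positive values of $s$, each at regularity $2$ by \Cref{thm: reg of join of graphs}. For statement (1) the same computation yields $e_0(S/J_{K_a*\overline{K_2}})=a+2$, which realizes every $s\ge 4$, while $s=1$ is handled by the star $K_{1,n}$.

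For the regularity‑$3$ statements I would glue a complete graph onto a regularity‑$2$ block. Gluing $K_m$ to a star $K_{1,n}$ along a vertex that is free in both produces $\reg=1+2=3$ by \Cref{thm: reg under decomposition}, and the product rule above gives $e_0=m\cdot 1=m$ and $e_1=(m-1)\cdot 1+m\cdot 0=m-1$. Taking $m=s$ settles statement (2) for all $s\ge 2$, and taking $m=s+1$ settles statement (4) for all $s\ge 1$. The only base value left is $e_0=1$ at regularity $3$, which I would obtain from the biclaw $B_{n,n}$: its unique top‑dimensional cut set $\{u,v\}$ forces $e_0=1$, and its induced path on four vertices forces $\reg=3$.

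The main obstacle is the small exceptional values left uncovered by these uniform families — most conspicuously $e_0\in\{2,3\}$ at regularity $2$. These cannot be produced by the joins above (the split/bipartite families yield $e_0\in\{1\}\cup\{4,5,6,\dots\}$) nor by multiplying blocks through a decomposition (the factors overshoot), so they must be realized, if at all, by dedicated small graphs analyzed by hand through the associativity formula $e_0(S/J_G)=\sum_{T}\prod_j |V(G_j)|$, the sum running over top‑dimensional $T\in\mathcal{C}(G)$ with $G\setminus T=\bigsqcup_j G_j$. The genuinely delicate point is that fixing $e_0$ to a small value tends to create an induced path on four vertices, which by \Cref{thm: BEI with reg equal to 3} would push the regularity up to $3$; reconciling the cut‑set bookkeeping that pins $e_0$ with the rigid structural description of the regularity‑$2$ graphs is exactly where I expect the argument to require the most care.
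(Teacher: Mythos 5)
Your parts (2), (3), and (4) hold up. The numerator-multiplicativity under decomposition (hence multiplicativity of $e_0$ and the Leibniz rule for $e_1$) is a correct consequence of \Cref{thm: reg and mult under decomposition}\eqref{thm: HS under decomposition}, and your clique sums $K_m\cup_v K_{1,n}$ are, up to the choice of $n$, exactly the graphs the paper uses for (2) and (4) --- the paper merely computes their coefficients via \Cref{thm: delete vertex} (the center of the star has free clique degree $n$) rather than via the product rule. Your part (3) is a genuinely different and correct route: I checked that \Cref{thm: HS and reg of join of graphs}\eqref{thm: HS of join of graphs} does give $e_1(S/J_{K_{a,a}})=2a-3$ and $e_1(S/J_{K_a*\overline{K_a}})=2a-2$ for $a\geq 2$, covering all $s\geq 1$, whereas the paper instead uses $(K_{s+1}\sqcup\{u_1,u_2,u_3\})*\{w\}$ and deletes $w$ by \Cref{thm: delete vertex}. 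One debt in your write-up: for the biclaw, an induced path on four vertices gives only the \emph{lower} bound $\reg(S/J_{B_{n,n}})\geq 3$ (via Matsuda--Murai, which is not even among this paper's tools); the upper bound $\leq 3$ is not argued and requires the block-graph regularity results that the paper cites for $B_{3,3}$.

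The genuine gap is part (1): you realize $e_0=1$ and $e_0\geq 4$, and explicitly leave $s=2,3$ unresolved, while the statement requires every $s\geq 1$. Moreover, your diagnosis that these values force an induced $P_4$ and hence regularity $3$ is wrong. The paper's graph $G=(K_s\sqcup\{u,v\})*\{w\}$ works uniformly for all $s\geq 1$: it is a join of a non-complete graph with a vertex, so $\reg(S/J_G)=2$ by \Cref{thm: BEI with reg equal to 3}, and it is $P_4$-free as every join is; furthermore $\fcdeg_G(w)=3$ (the edges $uw$, $vw$ and the clique on $V(K_s)\cup\{w\}$ are all free cliques of $w$), so \Cref{thm: delete vertex} gives $e_0(S/J_G)=e_0(S/J_{K_s\sqcup\{u,v\}})=e_0(S/J_{K_s})=s$. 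You could even have found this with your own tools: plugging $H=K_s\sqcup\{u,v\}$, $H'=\{w\}$ into \Cref{thm: HS and reg of join of graphs}\eqref{thm: HS of join of graphs} gives the reduced numerator $1+(s-1)t+(1-t)\left[(s+2)t+1\right]-(1-t)^2\left[(s+1)t+1\right]$, whose value at $t=1$ is $s$. The structural feature your families miss is the presence of isolated vertices on the non-complete side of the join together with a \emph{single} joining vertex $w$: then $T=\{w\}$ is the dominant cut set, so the top-dimensional part of $S/J_G$ is just $K_s$ plus free variables. In $K_a*\overline{K_2}$ and $K_a*\overline{K_a}$ no vertex has free clique degree $\geq 3$ and the empty cut set dominates, which is exactly why those families overshoot to $e_0\geq 4$.
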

	\begin{proof}
		(1) Let $H=K_{s}\sqcup\{u, v\}, H'=\{w\}$, and set $G=H*H'$. Then $\reg(S/J_G)=2$ by \Cref{thm: BEI with reg equal to 3}. Note that $\fcdeg_G(w)=3$. Hence by  \Cref{thm: delete vertex}, we obtain $e_0(S/J_G)=e_0(S/J_{G\setminus{w}})$. The graph $G\setminus{w}$ is the disjoint union of $K_{s}$ and two isolated vertices, which implies that $e_0(S/J_{G\setminus{w}})=s$. Therefore, $e_0(S/J_{G})=s$. 

        \medskip \noindent
		(2) For the graph $B_{3,3}$, it follows from \cite[Theorem 4.1, Theorem 4.2]{RegBEIcertainblockgraph} that $\reg(S/J_{B_{3,3}})=3$ and by \Cref{thm: delete vertex}, we get $e_0(S/J_{B_{3,3}})=1$. Now assume that $s\geq 2$, and set $G=K_{s}\cup_v K_{1,4}$, where $v$ is a free vertex in both $K_s$ and $K_{1,4}$. By \Cref{thm: reg and mult under decomposition}\eqref{thm: reg under decomposition}, we get 
        \[
        \reg(S/J_{G})=\reg(S/J_{K_{s}})+\reg(S/J_{K_{1,4}})=3.
        \]
        Since the center vertex of $K_{1,4}$ in the graph $G$ has free clique degree equal to $3$, by \Cref{thm: delete vertex} we get $e_0(S/J_{G})=e_0(S/J_{K_s})=s$.
        
        \medskip \noindent
		(3) Let $H=K_{s+1}\sqcup\{u_1,u_2,u_3\}$, $H'=\{w\}$, and set $G=H*H'$. By \Cref{thm: BEI with reg equal to 3}, we get $\reg(S/J_{G})=2$. Note that $\fcdeg_G(w)=4$, and hence by \Cref{thm: delete vertex} we obtain, 
        \[
        e_1(S/J_{G})=e_1(S/J_{G\setminus{w}})=e_1(S/J_{K_{s+1}})=s.
        \]

        \medskip \noindent
		(4) By similar arguments as above, it is easy to verify that if $G=K_{s+1}\cup_{v}K_{1,5}$, where $v$ is a free vertex in both $K_{s+1}$ and $K_{1,5}$, then $\reg(S/J_{G})=3$ and $e_1(S/J_{G})=s$.
	\end{proof}

	\begin{proposition}\label{prop: r=2or3 e_1=s<=0}
		Let $s\leq 0$ be an integer. Then the following statements are true:
		\begin{enumerate}
			\item \label{prop: r=2or3 e_1=s<=0 part 1} There is a graph $G$ such that $\reg(S/J_G)=2$ and $e_1(S/J_G)=s$.
			\item \label{prop: r=2or3 e_1=s<=0 part 2} There is a graph $G$ such that $\reg(S/J_G)=3$ and $e_1(S/J_G)=s$.
		\end{enumerate}
	\end{proposition}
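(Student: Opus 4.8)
The plan is to control the pair $(e_0,e_1)$ under the two operations for which the paper already records Hilbert-series formulas: the join (\Cref{thm: HS of join of graphs}) and the decomposition (\Cref{thm: HS under decomposition}). The first step is to extract from the join formula a clean expression for $e_0,e_1$ in the regime where the ``large clique'' term dominates. For $G=H_1*H_2$ with $p=|V(H_1)|$, $q=|V(H_2)|$, I would rewrite $\hs(S/J_G)=\hs(S_{H_1}/J_{H_1})+\hs(S_{H_2}/J_{H_2})+\hs(S/J_{K_{p+q}})-\hs(S/J_{K_p})-\hs(S/J_{K_q})$ and place it over $(1-t)^{p+q+1}$. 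I would then isolate the hypothesis that makes this transparent: if $\dim(S/J_{H_1})=p+q$ while $\dim(S/J_{H_2})$, $\dim(S/J_{K_p})=p+1$ and $\dim(S/J_{K_q})=q+1$ are all $\le p+q-1$ (so $p,q\ge 2$), then only the $K_{p+q}$-term lives in degree $p+q+1$ and only the $H_1$-term lives in degree $p+q$. Reading off the numerator $Q(t)=\bigl(1+(p+q-1)t\bigr)+Q_{H_1}(t)(1-t)+O\!\left((1-t)^2\right)$ gives $e_0(S/J_G)=p+q$ and, crucially,
\[
e_1(S/J_G)=(p+q-1)-e_0(S/J_{H_1}).
\]

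The main obstacle—and the reason the naive candidates fail—is producing a genuinely negative, parity-flexible $e_1$: complete graphs, their disjoint unions, stars, and biclaws all have $e_1\ge 0$ or hit only sporadic (and even) negative values, so they cannot simply be tuned. The displayed identity resolves this, since the subtracted term $e_0(S/J_{H_1})$ is a multiplicity and can be made arbitrarily large by letting $H_1$ be a disjoint union of complete graphs. For part (1) I would take $H_1=K_2\sqcup K_k$ and $H_2=K_2$, so that $p=k+2$, $q=2$, $\dim(S/J_{H_1})=(k+1)+3=k+4=p+q$, and $e_0(S/J_{H_1})=2k$. The hypotheses hold for $k\ge 2$; by \Cref{thm: reg of join of graphs} (as $K_2\sqcup K_k$ is non-complete) $\reg(S/J_G)=\max\{\reg(S/J_{H_1}),\reg(S/J_{H_2}),2\}=2$, and the identity yields $e_1(S/J_G)=(k+3)-2k=3-k$. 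As $k$ runs over $\{3,4,5,\dots\}$ this realizes every integer $\le 0$, so $k=3-s$ settles (1).

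For part (2) I would bump the regularity from $2$ to $3$ by a single clique sum, via \Cref{thm: reg and mult under decomposition}. Write $G_0=(K_2\sqcup K_k)*K_2$ for the graph just built, with $e_0(S/J_{G_0})=k+4$ and $e_1(S/J_{G_0})=3-k$. I would first check that $G_0$ has a free vertex: any vertex $w$ of the $K_k$-block has closed neighbourhood $V(K_k)\cup V(K_2)$, which is a clique because of the join, so $w$ is free. Hence $G=G_0\cup_w K_2$ is a genuine decomposition, \Cref{thm: reg under decomposition} gives $\reg(S/J_G)=\reg(S/J_{G_0})+\reg(S/J_{K_2})=2+1=3$, and \Cref{thm: HS under decomposition} gives $Q_G(t)=Q_{G_0}(t)(1+t)$, whence
\[
e_1(S/J_G)=2\,e_1(S/J_{G_0})+e_0(S/J_{G_0})=2(3-k)+(k+4)=10-k.
\]
Letting $k$ run over $\{10,11,12,\dots\}$, i.e. $k=10-s$, realizes every $s\le 0$ with regularity $3$, completing (2).

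The only steps requiring care are the dimension bookkeeping in the first paragraph—verifying that exactly one term sits in each of the top two degrees, so that $e_0$ and $e_1$ are read off cleanly—and confirming that the explicit choice $H_1=K_2\sqcup K_k$ meets the hypothesis $\dim(S/J_{H_1})=p+q$; both are routine once the dimensions of $\hs(S/J_{K_2\sqcup K_k})$, $\hs(S/J_{K_{p+q}})$ and $\hs(S/J_{K_p})$ are lined up. I would also note that $H_2=K_2$ (rather than two non-adjacent vertices) is chosen precisely so that $G_0$ acquires the free vertex needed for the decomposition in part (2), while it does not affect the values $e_0(S/J_{G_0}),e_1(S/J_{G_0})$ because the $\hs(S/J_{H_2})$ and $-\hs(S/J_{K_q})$ contributions cancel.
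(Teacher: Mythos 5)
Your proposal is correct and takes essentially the same route as the paper: a join $H_1*K_2$ with a disconnected $H_1$ realizes regularity $2$ with $e_1$ a decreasing linear function of a size parameter, and then a clique sum with $K_2$ at a free vertex raises the regularity to $3$ while multiplying the reduced numerator by $(1+t)$, i.e.\ $e_1\mapsto 2e_1+e_0$. The only differences are cosmetic: the paper takes $H_1=(K_{r+2}\cup_{u}K_2)\sqcup\{v\}$ and computes the reduced numerators explicitly (getting $e_1=1-r$, then $8-r$), whereas you take the simpler $H_1=K_2\sqcup K_k$ and package the same computation in the identity $e_1(S/J_{H_1*H_2})=(p+q-1)-e_0(S/J_{H_1})$, which indeed reproduces the paper's values and whose dimension hypotheses you verify correctly.
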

	\begin{proof}
		(1) Let $H=(K_{r+2}\cup_{u}K_2)\sqcup\{v\}$ be a graph on the vertex set $[r+4]$ for $r\geq 1$, where $u$ is a free vertex in both $K_{r+2}$ and $K_2$. Further, take $H'=K_2$ and set $G=H*H'$. By Theorem \ref{thm: BEI with reg equal to 3}, we have $\reg(S/J_{G})=2$. By \Cref{thm: reg and mult under decomposition}\eqref{thm: HS under decomposition}, the Hilbert series of $S/J_{H}$ is given by
		\[
		\hs(S/J_H,t)=(1-t)^2\frac{1+(r+1)t}{(1-t)^{r+3}}\frac{1+t}{(1-t)^{3}}\frac{1}{(1-t)^2}=\frac{1+(2+r)t+(r+1)t^2}{(1-t)^{r+6}}.
		\]
		Using \Cref{thm: HS and reg of join of graphs}\eqref{thm: HS of join of graphs}, we obtain
		\begin{align*}
			\hs(S/J_{G},t)=&\frac{1+(2+r)t+(r+1)t^2}{(1-t)^{r+6}}+\frac{1+(r+5)t}{(1-t)^{r+7}}-\frac{1+(r+3)t}{(1-t)^{r+5}}\\
			=&\frac{(1-t)[1+(2+r)t+(r+1)t^2]+1+(r+5)t-(1-t)^2[1+(r+3)t]}{(1-t)^{r+7}}\\
			=&\frac{1+(r+5)t+(1-t)(2r+4)t^2}{(1-t)^{r+7}}.
		\end{align*}
		Let $P(t)=1+(r+5)t+(1-t)(2r+4)t^2$. Note that the Hilbert series of $S/J_G$ is reduced, so we have $e_1(S/J_{G})=P'(1)$. A direct computation gives $P'(1)=r+5-2r-4=1-r$. As $s\leq 0$, choosing $r=1-s$ yields $e_1(S/J_{G})=s$, as required.

        \medskip \noindent
		(2) Let $G$ be the graph constructed in (1) and set $G_1=G\cup_{v}K_2$, where $v$ is a free vertex in both $G$ and $K_2$. Note that $G_1$ is a graph on the vertex set $[r+5]$. By \Cref{thm: reg and mult under decomposition}\eqref{thm: reg under decomposition}, we obtain $\reg(S/J_{G_1})=\reg(S/J_{G})+1=3$. Moreover, using \Cref{thm: reg and mult under decomposition}\eqref{thm: HS under decomposition}, the Hilbert series of $S/J_{G_1}$ is 
		\begin{align*}
			\hs(S/J_{G_1},t)&=(1-t)^2\frac{1+(r+5)t+(1-t)(2r+4)t^2}{(1-t)^{r+7}}\frac{1+t}{(1-t)^3}\\
			&=\frac{(1+t)[1+(r+5)t+(1-t)(2r+4)t^2]}{(1-t)^{r+8}}.
		\end{align*}
		Let $Q(t)=(1+t)P(t)$, where $P(t)=1+(r+5)t+(1-t)(2r+4)t^2$. Since $Q(1)\neq 0$, the above Hilbert series is reduced, and hence $e_1(S/J_{G_1})=Q'(1)$ as $Q(1)\neq 0$. Because $Q'(1)=2P'(1)+P(1)$, we get $e_1(S/J_{G_1})=8-r$. Finally, choosing $r=-s+8$ gives get $e_1(S/J_{G_1})=s$, as required.
	\end{proof}

    \par
	Next, we turn to the construction of graphs whose binomial edge ideals have prescribed Hilbert coefficients $e_i(S/J_G)$, where $i\geq 2$. As before, we begin with cases in which the regularity of the binomial edge ideal is small, specifically, when $\reg(S/J_G)\leq 3$. With this in mind, we consider the following constructions of graphs.  
	\begin{construction}\label{construction: positive even e_i and negative odd e_i}\normalfont
		Let $H=K_{m}\sqcup\{v\}$, and $H'=K_{n}$ for integers $m,n\geq 1$. By \Cref{thm: HS and reg of join of graphs}\eqref{thm: HS of join of graphs} the Hilbert series of the join $H*H'$ is
		\begin{align*}
			\hs(S/J_{H*H'},t)&=\frac{1+(m-1)t}{(1-t)^{m+3}}+\frac{1+(m+n)t}{(1-t)^{m+n+2}}-\frac{1+mt}{(1-t)^{m+2}}\\
			&=\frac{mt^2}{(1-t)^{m+3}}+\frac{1+(m+n)t}{(1-t)^{m+n+2}}\\
			&=\frac{1+(m+n)t+mt^2(1-t)^{n-1}}{(1-t)^{m+n+2}}.
		\end{align*}
		Let $P(t)=1+(m+n)t+mt^2(1-t)^{n-1}$. Note that $P(1)\neq 0$, so the above is a reduced Hilbert series. Since $\deg (P(t))=n+1$, it follows that 
        \[
        e_{n+1}(S/J_{H*H'})=\frac{P(1)^{(n+1)}}{(n+1)!}=m(-1)^{n-1}.
        \]
        Moreover, by \Cref{thm: HS and reg of join of graphs}\eqref{thm: reg of join of graphs}, it follows that $\reg(S/J_{H*H'})=2$. Therefore, for any $i\geq 1$, we conclude the following:
		\begin{enumerate}
			\item Given an integer $s\geq 1$, there is a graph $G$ such that 
            \[
            \reg(S/J_G)=2 \text{ and } e_{2i}(S/J_G)=s.
            \]
			\item Given an integer $s\leq -1$, there is a graph $G$ such that 
            \[
            \reg(S/J_G)=2 \text{ and } e_{2i+1}(S/J_G)=s.
            \]
		\end{enumerate}
	\end{construction}
    
	\begin{construction}\label{construction: negative even e_i and positive odd e_i}\normalfont
		Let $H=K_{n}\sqcup\{v_1, \dots ,v_r\}, n,r\geq 1$, $H'=\{v\}$, and $S$ be the polynomial ring corresponding to the ideal $J_{H*H'}$. Note that the Hilbert series of  $J_H$ is 
        \[
        \hs(S/J_H)=\frac{1+(n-1)t}{(1-t)^{n+1+2r}}.
        \]
        Applying \Cref{thm: HS and reg of join of graphs}\eqref{thm: HS of join of graphs}, we obtain 
		\begin{align*}   
			\hs(S/J_{H*H'},t)&=\frac{1+(n-1)t}{(1-t)^{n+1+2r}}+\frac{1+(n+r)t}{(1-t)^{n+2+r}}-\frac{1+(n+r-1)t}{(1-t)^{n+1+r}}\\
			&= \frac{1+(n-1)t+(1-t)^{r-1}(1+(n+r)t)-(1-t)^r(1+(n+r-1)t)}{(1-t)^{n+1+2r}}\\
			&= \frac{1+(n-1)t+(1-t)^{r-1}[1+(n+r)t-(1-t)(1+(n+r-1)t)]}{(1-t)^{n+1+2r}}\\
			&= \frac{1+(n-1)t+(1-t)^{r-1}[2t+(n+r-1)t^2]}{(1-t)^{n+1+2r}}.
		\end{align*}
		Next, consider the graph $G=(K_{n}\sqcup\{u,v,w\})*K_m, n\geq 1, m\geq 3$. Again using \Cref{thm: HS and reg of join of graphs}\eqref{thm: HS of join of graphs} and the above computation, we get
		\begin{align*}   
			\hs(S/J_G,t)=& \frac{1+(n-1)t+(1-t)^2(2t+(n+2)t^2)}{(1-t)^{n+7}}+\frac{1+(n+m+2)t}{(1-t)^{n+m+4}}-\frac{1+(n+2)t}{(1-t)^{n+4}},
		\end{align*}
		and after simplifications, we get 
		\[
		\hs(S/J_G,t)= \frac{(1-t)^{m-3}[1+(n-1)t]+(1-t)^{m-1}Q(t)+1+(n+m+2)t}{(1-t)^{n+m+4}},
		\]
		where $Q(t)=-1-(n-1)t+2(n+2)t^2$. Setting 
        \[
        P(t)=(1-t)^{m-3}[1+(n-1)t]+(1-t)^{m-1}Q(t)+1+(n+m+2)t,
        \]
        we observe that $P(1)\neq 0$. Hence, the above Hilbert series is reduced, and for any $s\geq 0$, the Hilbert coefficients are given by $e_{s}(S/J_{G})=\frac{P^{(s)}(1)}{s!}$. Note that 
        \[
        P^{(m-2)}(1)=(-1)^{m-3}(m-3)!(m-2)(n-1),
        \]
        so we get $e_{m-2}(S/J_{G})=(-1)^{m-1}(n-1)$ for all $m\geq 4$. Moreover, by \Cref{thm: HS and reg of join of graphs}\eqref{thm: reg of join of graphs}, it follows that $\reg(S/J_{G})=2$. Thus for any $i\geq 1$, we conclude the following:
		\begin{enumerate}
			\item Given an integer $s\leq 0$, there is a graph $G$ such that 
            \[
            \reg(S/J_G)=2 \text{ and } e_{2i}(S/J_G)=s.
            \]
			\item Given an integer $s\geq 0$, there is a graph $G$ such that
            \[
            \reg(S/J_G)=2 \text{ and } e_{2i+1}(S/J_G)=s.
            \]
		\end{enumerate}
	\end{construction}
	
    The two constructions above provide graphs corresponding to the pairs $(2, s)$ where $s=e_i(S/J_G)$ for $i \ge 2$. The next proposition deals with the remaining cases of $r=3$, namely the pairs $(3, s)$ with $s=e_i(S/J_G)$ and $i \ge 2$. But before this, we require the following lemma.

    \begin{lemma}\label{proof of reg 3 of required graphs}
        Given integers $r\geq 2,s,t\geq 1$ and $n_i\geq 1, 1\leq i\leq r$. Assume that $n_1\geq 2$ and $w\in V(K_{n_1})$. Consider the graph $G_{r,s,t}$  where 
        \begin{align*}
            V(G_{r,s,t})&=V((K_{n_1}\sqcup K_{n_2}\sqcup\cdots\sqcup K_{n_r})*K_s)\cup\{u_1, \dots, u_t\}, \\
        E(G_{r,s,t})&=E((K_{n_1}\sqcup K_{n_2}\sqcup\cdots\sqcup K_{n_r})*K_s)\cup\{wu_1, \dots, wu_t\}. 
        \end{align*}
    Further assume that $n_i\geq 2$ for at most two $i$. Then $\reg(S/J_{G_{r,s,t}})=3$ for all $r\geq 2$ and $s,t\geq 1$. 
    \end{lemma}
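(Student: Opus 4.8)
The plan is to establish the two bounds $\reg(S/J_G)\ge 3$ and $\reg(S/J_G)\le 3$ separately, where I abbreviate $G:=G_{r,s,t}$. Set $H=(K_{n_1}\sqcup\cdots\sqcup K_{n_r})*K_s$, so that $G$ is $H$ together with the pendant vertices $u_1,\dots,u_t$ attached at $w$. The starting point is the observation that $w$ is a \emph{free} vertex of $H$: its neighborhood $(V(K_{n_1})\setminus\{w\})\cup V(K_s)$ together with $w$ spans the clique $K_{n_1+s}$, which is the only maximal clique of $H$ through $w$. I would also record that $\reg(S/J_H)=2$: since $K_{n_1}\sqcup\cdots\sqcup K_{n_r}$ is disconnected (as $r\ge 2$), \Cref{thm: HS and reg of join of graphs}\eqref{thm: reg of join of graphs} gives $\reg(S/J_H)=\max\{\reg(S/J_{K_{n_1}\sqcup\cdots\sqcup K_{n_r}}),\reg(S/J_{K_s}),2\}$, and the hypothesis that at most two of the $n_i$ exceed $1$ forces the disjoint-union factor to have regularity at most $2$ by \Cref{thm: BEI with reg equal to 3} (the $K_{n_i}$ with $n_i=1$ are isolated vertices and are irrelevant to regularity), so the maximum is exactly $2$.

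For the lower bound, I would pass to the induced subgraph of $G$ on $V(H)\cup\{u_1\}$, which is exactly $G_{r,s,1}=H\cup_w K_2$. As $w$ is free in both $H$ and $K_2$, \Cref{thm: reg and mult under decomposition}\eqref{thm: reg under decomposition} yields $\reg(S/J_{G_{r,s,1}})=\reg(S/J_H)+\reg(S/J_{K_2})=2+1=3$. Since the regularity of a binomial edge ideal does not drop under passage to an induced subgraph (Matsuda--Murai), this gives $\reg(S/J_G)\ge \reg(S/J_{G_{r,s,1}})=3$.

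For the upper bound I would apply Ohtani's decomposition \eqref{eqn: decomposition of J_G} at $w$, producing the short exact sequence
\[
0\to S/J_G\to S/J_{G_w}\oplus S/\bigl(J_{G\setminus w}+\langle x_w,y_w\rangle\bigr)\to S/\bigl(J_{G_w}+J_{G\setminus w}+\langle x_w,y_w\rangle\bigr)\to 0,
\]
whose middle and right terms I denote by $B$ and $C$. The crucial structural point is that completing the neighborhood of $w$ merges $V(K_{n_1})\cup\{u_1,\dots,u_t\}$ into one clique, so $G_w=(K_{n_1+t}\sqcup K_{n_2}\sqcup\cdots\sqcup K_{n_r})*K_s$; moreover $G\setminus w=H'\sqcup\{u_1,\dots,u_t\}$ with $H'=(K_{n_1-1}\sqcup K_{n_2}\sqcup\cdots\sqcup K_{n_r})*K_s$. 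Since $G\subseteq G_w$ we have $E(G\setminus w)\subseteq E(G_w\setminus w)$, and $J_{G_w}+\langle x_w,y_w\rangle=J_{G_w\setminus w}+\langle x_w,y_w\rangle$, so the term $C$ collapses to $S/\bigl(J_{G_w\setminus w}+\langle x_w,y_w\rangle\bigr)$ with $G_w\setminus w=(K_{n_1+t-1}\sqcup K_{n_2}\sqcup\cdots\sqcup K_{n_r})*K_s$.

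Each of the three graphs $G_w$, $H'$, and $G_w\setminus w$ is again of the form $(\text{disjoint union of cliques})*K_s$ in which at most two of the cliques have at least two vertices — this is precisely where the hypothesis on the $n_i$ enters, the modified first clique ($K_{n_1+t}$, $K_{n_1-1}$, or $K_{n_1+t-1}$) contributing at most one large component. Hence, by the same computation as for $H$, all three corresponding quotients have regularity $2$; deleting the variables $x_w,y_w$ and discarding the isolated vertices $u_i$ does not affect this. Feeding $\reg B=\reg C=2$ into the standard estimate $\reg(S/J_G)\le\max\{\reg B,\reg C+1\}$ for a short exact sequence gives $\reg(S/J_G)\le 3$, and combined with the lower bound this proves $\reg(S/J_G)=3$. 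The main obstacle is exactly that for $t\ge 2$ the vertex $w$ fails to be free in the pendant star $K_{1,t}$, so \Cref{thm: reg and mult under decomposition} cannot be applied directly; routing the upper bound through Ohtani's decomposition and verifying that all three resulting graphs retain the property that at most two of their cliques are large is what makes the argument succeed.
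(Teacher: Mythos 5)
Your proof is correct, but it takes a genuinely different route from the paper's. The paper argues by induction on $t$: its base case $t=1$ is exactly your lower-bound computation (writing $G_{r,s,1}=H\cup_w K_2$ and applying \Cref{thm: reg and mult under decomposition}\eqref{thm: reg under decomposition} together with \Cref{thm: BEI with reg equal to 3}), and its inductive step invokes a partial Betti splitting result from the literature (Corollary 4.4 of Jayanthan--Sivakumar--Van Tuyl), which gives the exact identity $\reg(S/J_{G_{r,s,t}})=\max\{\reg(S/J_{G_{r,s,t-1}}),\,1+\reg(S/J_{G'})\}$ with $G'=(K_{n_1+t-1}\sqcup K_{n_2}\sqcup\cdots\sqcup K_{n_r})*K_s$; note that this $G'$ is precisely your $G_w\setminus w$, so the two arguments are close cousins. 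You instead prove the two inequalities separately: the lower bound by combining the $t=1$ computation with Matsuda--Murai's monotonicity of regularity under induced subgraphs, and the upper bound by running Ohtani's decomposition \eqref{eqn: decomposition of J_G} at $w$, checking that all three graphs $G_w$, $G\setminus w$, $G_w\setminus w$ are again joins of the form (cliques, at most two of them large)$\,*\,K_s$ — which is exactly where the hypothesis on the $n_i$ enters, and your verification of this is correct — so that each has regularity $2$ by \Cref{thm: HS and reg of join of graphs}\eqref{thm: reg of join of graphs}, and then feeding this into the standard estimate $\reg A\le\max\{\reg B,\reg C+1\}$ for a short exact sequence $0\to A\to B\to C\to 0$. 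What the paper's route buys is brevity: the splitting corollary packages your whole exact-sequence analysis into one equality, and the induction removes the need for a separate lower bound. What your route buys is self-containedness relative to the paper's own toolkit (Ohtani's lemma is already used in \Cref{thm: delete vertex}) and a uniform treatment of all $t\ge 1$ without induction. The only caveat: the two classical facts you rely on — Matsuda--Murai's induced-subgraph inequality and the short-exact-sequence regularity bound — are nowhere stated in the paper, so they would need explicit citations if your argument replaced the paper's; both are standard and correctly applied.
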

    \begin{proof}
        We proceed by induction on $t$. If $t=1$ then $w$ is a free vertex in both  $\{wu_1\}$ and $(K_{n_1}\sqcup K_{n_2}\sqcup\cdots\sqcup K_{n_r})*K_s$, and hence the statement follows from \Cref{thm: reg and mult under decomposition}\eqref{thm: reg under decomposition} and \Cref{thm: BEI with reg equal to 3}.  Assume $t\geq 2$. Then by \cite[Corollary 4.4]{Partial-Betti-splittings-applications-to-BEI-Jayanthan-Shivakumar-Van-tuyl}, we get 
        \[
            \reg(S/J_{G_{r,s,t}})=\max\left\{\reg(S/J_{G_{r,s,t-1}}),1+\reg(S/J_{G'})\right\},
        \]
        where $G'=({K_{n_1+t-1}}\sqcup K_{n_2}\sqcup\cdots\sqcup K_{n_r})*K_s$. By \Cref{thm: BEI with reg equal to 3}, $\reg(S/J_{G'})=2$ and the induction hypothesis gives $\reg(S/J_{G_{r,s,t-1}})=3$ as $t\geq2$. Therefore, we get the desired result. 
    \end{proof}
	\begin{proposition}\label{prop: reg 3 and all e_i}
        Let $i\geq 2$ be an integer. Given $s\in \mathbb{Z}$, there is a graph $G$ such that 
        \[
        \reg(S/J_{G})=3 \text{ and } e_i(S/J_{G})=s.
        \]
	\end{proposition}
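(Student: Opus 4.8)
The plan is to bootstrap from the regularity~$2$ constructions. \Cref{construction: positive even e_i and negative odd e_i} and \Cref{construction: negative even e_i and positive odd e_i} already produce, for every $i\ge 2$ and every prescribed value, a graph $H_0$ with $\reg(S/J_{H_0})=2$ and $e_i(S/J_{H_0})$ equal to that value, and each such $H_0$ has the shape $(K_{a_1}\sqcup\cdots\sqcup K_{a_r})*K_m$ with $r\ge 2$ and with at most one $a_j\ge 2$. First I would record that these two families together realize all of $\mathbb Z$: for even $i$ the first construction gives every positive integer (via $e_i=m$) while the second gives every integer $\le 0$ (via $e_i=-(n-1)$), and for odd $i$ the two roles are interchanged; in particular $s=0$ is always attained by the second construction. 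The idea is then to raise the regularity from $2$ to $3$ by a whisker construction that leaves $e_i$ untouched, so that the realizable values of $e_i$ at regularity $3$ are exactly those already realized at regularity $2$.

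Concretely, given a target pair $(3,s)$ with $i\ge 2$, I would fix a graph $H_0$ as above with $e_i(S/J_{H_0})=s$, then enlarge a clique component (the unique one of size $\ge 2$, or any component when all have size one) to $K_{a_1+1}$ by adjoining a single new vertex $w$, and finally attach $t:=i+3$ pendant vertices $u_1,\dots,u_t$ to $w$. By construction the resulting graph $G$ is precisely the graph $G_{r,m,t}$ of \Cref{proof of reg 3 of required graphs}, with join factor $K_m$ and with clique sizes $n_1=a_1+1\ge 2$ and $n_2=\cdots=n_r=1$; since at most one $n_j$ exceeds $1$, the hypotheses of that lemma are met and it gives $\reg(S/J_G)=3$.

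It remains to check that this has not disturbed the chosen Hilbert coefficient, and here the key computation is the free clique degree of $w$ (\Cref{def: FCD}). The maximal cliques through $w$ are the single large clique $V(K_{n_1})\cup V(K_m)$ together with the $t$ pendant edges $\{w,u_\ell\}$. Each pendant edge is a free clique of $w$, since its other endpoint is a free vertex, whereas the large clique is \emph{not} a free clique of $w$: because $r\ge 2$, every vertex of $K_m$ is joined to the other disjoint cliques and hence lies in more than one maximal clique. Therefore $\fcdeg_G(w)=t=i+3$, and \Cref{thm: delete vertex} yields $e_i(S/J_G)=e_i(S/J_{G\setminus w})$. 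Removing $w$ restores the first component to $K_{a_1}$ and isolates the pendants, so $G\setminus w=H_0\sqcup\{u_1,\dots,u_t\}$; adjoining isolated vertices only multiplies the Hilbert series by a power of $(1-t)^{-1}$ and leaves the reduced numerator unchanged, whence $e_i(S/J_{G\setminus w})=e_i(S/J_{H_0})=s$. This gives the desired pair $(3,s)$.

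The only genuine work lies in the two verifications that knit the three tools together, and I expect these rather than any new estimate to be the crux. The first is the parity and sign bookkeeping ensuring that the regularity~$2$ constructions cover every $s\in\mathbb Z$, including the boundary value $s=0$. The second is pinning down $\fcdeg_G(w)=t$ exactly, which rests on the observation that the large clique fails the free clique condition precisely because $r\ge 2$; were $r=1$ the join would be complete and the count would be off by one. Both are combinatorial checks on the construction together with confirmation of the numerical hypotheses of \Cref{proof of reg 3 of required graphs}, so no essential difficulty is anticipated beyond careful assembly.
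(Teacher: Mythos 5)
Your proposal is correct, and it is assembled from exactly the same three ingredients as the paper's proof---\Cref{thm: delete vertex}, \Cref{proof of reg 3 of required graphs}, and the regularity-$2$ graphs of \Cref{construction: positive even e_i and negative odd e_i} and \Cref{construction: negative even e_i and positive odd e_i}---but you deploy them more uniformly. The paper splits into four cases: it handles $s=0$ by a separate biclaw graph $B_{i+3,i+3}$; the cases ($i\geq 4$ even, $s\leq -1$) and ($i\geq 3$ odd, $s\geq 1$) by the clique sum $G_{r,n}=((K_r\sqcup \{v\})*K_n)\cup_v K_2$, whose Hilbert series it computes directly via \Cref{thm: reg and mult under decomposition}; and only the remaining cases (even $i$ with $s\geq 1$, odd $i$ with $s\leq -1$, and $i=2$ with $s\leq -1$) by the pendant-vertex device you propose---indeed the paper's graphs $G'_{r,n}$ and its final $i=2$ construction are instances of your template, where the singleton component $\{v\}$ is enlarged to an edge $\{v,w\}$ and pendants are hung on $w$. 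Your key additional observation is that the two regularity-$2$ constructions already realize every $s\in\mathbb Z$ for every $i\geq 2$ (including $s=0$, via $n=1$ in the second construction), so a single template suffices: this eliminates the parity/sign case analysis, the biclaw, and the extra Hilbert-series computation. I verified the points on which your argument rests: the coverage bookkeeping is right; after enlargement exactly one clique component has size $\geq 2$, so the hypotheses $r\geq 2$, $n_1\geq 2$, and ``at most two $n_i\geq 2$'' of \Cref{proof of reg 3 of required graphs} hold; the free-clique count is right, since the pendant edges are free cliques while the large clique is not (its $K_m$-vertices lie in $r\geq 2$ maximal cliques); and deleting $w$ leaves $H_0$ plus isolated vertices, which change only the power of $(1-t)$ in the denominator of the Hilbert series and hence no $e_i$ (a fact the paper itself uses in its Theorem on higher regularity). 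One small remark: you do not need $\fcdeg_G(w)=t$ \emph{exactly}---\Cref{thm: delete vertex} only requires the lower bound $\fcdeg_G(w)\geq i+3$, so the $r=1$ ``off by one'' concern in your last paragraph is immaterial for the coefficient computation; where $r\geq 2$ is genuinely needed is in the regularity lemma.
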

	\begin{proof}
		If $s=0$, consider $G=B_{i+3,i+3}$. Then it follows from \cite[Theorem 4.1, Theorem 4.2]{RegBEIcertainblockgraph} that $\reg(S/J_{G})=3$. Moreover, by Theorem \ref{thm: delete vertex}, $e_i(S/J_{G})=0$ for all $i\geq 2$. Now, assume that $s\neq 0$. Take the graph $(K_r\sqcup \{v\})*K_n$ for $r,n\geq 1$ as considered in \Cref{construction: positive even e_i and negative odd e_i}, and ~set 
        \[
        G_{r,n}=((K_r\sqcup \{v\})*K_n)\cup_v K_2,
        \]
        Therefore, by \Cref{thm: reg and mult under decomposition}\eqref{thm: reg under decomposition}, we get $\reg(S/J_{G_{r,n}})=3$ for all $r,n\geq 1$. Again by \Cref{thm: reg and mult under decomposition}\eqref{thm: HS under decomposition}, the Hilbert series of $S/J_{G_{r,n}}$ is given by 
		\[
		\hs(S/J_{G_{r,n}},t)=\frac{(1+t)[1+(n+r)t+rt^2(1-t)^{n-1}]}{(1-t)^{n+r+3}}.
		\]
		Then we have $e_{n+2}(S/J_{G_{r,n}})=r(-1)^{n+1}$ for all $r,n\geq 1$. Therefore, whenever $i\geq 4$ is even, and $s\leq -1$, we have $e_i(S/J_{G_{(-s),i-2}})=s$; and when $i\geq 3$ is odd, and $s\geq 1$, we have $e_i(S/J_{G_{s,i-2}})=s$.
		
		
		Next, we take $H=K_2$, where $V(H)=\{v,w\}$. Let us consider the graph $G'_{r,n}$ where 
        \begin{align*}
            V(G'_{r,n})&=V((K_r\sqcup H)*K_n)\cup\{u_1,\dots ,u_{n+4}\},\\
            E(G'_{r,n})&= E((K_r\sqcup H)*K_n)\cup \{wu_1, wu_2,\dots ,wu_{n+4}\}
        \end{align*}
        for $r,n\geq 1$. Then observe that $\fcdeg_{G'_{r,n}}(w)=n+4$, and hence by \Cref{thm: delete vertex}, we get 
        \[
        e_{n+1}(S/J_{G'_{r,n}})=e_{n+1}(S/J_{G'_{r,n}\setminus w}).
        \]
        Note that $G'_{r,n}\setminus w= ((K_r\sqcup\{v\})*K_n)\cup \{u_1,\dots ,u_{n+4}\}$, and hence, using \Cref{construction: positive even e_i and negative odd e_i}, we get $e_{n+1}(S/J_{G'_{r,n}})=r(-1)^{n+1}$ for all $r,n\geq 1$. Then it follows from \Cref{proof of reg 3 of required graphs} that $\reg(S/J_{G'_{r,n}})=3$ for all $r,n\geq 1$. Therefore, whenever $i\geq 2$ is even, and $s_i\geq 1$, we have $e_i(S/J_{G_{s,i-1}})=s$; and when $i\geq 3$ is odd, and $s\leq -1$, we have $e_i(S/J_{G_{(-s),i-1}})=s$.
		
		Finally, it remains the case when $i=2$ and $s\leq -1$. For this, we take \[
        H=(K_{n-1}\sqcup\{u\}\sqcup\{vw\})*K_2,
        \]
        where $n\geq 2$ and consider the graph $G$ where 
        \begin{align*}
            V(G)=V(H)\cup \{u_1,\dots ,u_5\},\ 
            E(G)=E(H)\cup\{wu_1,\dots ,wu_5\}.
        \end{align*}
        Since $\fcdeg_G(w)=5$, we get by \Cref{thm: delete vertex} that $e_2(S/J_G)=e_2(S/J_{G\setminus w})$. Note that 
        \[
        G\setminus w=((K_{n-1}\sqcup\{u\}\sqcup\{v\})*K_2)\sqcup \{u_1,\dots ,u_5\}.
        \]
        Then by \Cref{construction: negative even e_i and positive odd e_i}, we get $e_2(S/J_G)=-(n-2)$. By \Cref{proof of reg 3 of required graphs}, we get $\reg(S/J_G)=3$, and this completes the proof.
	\end{proof}

    Up to this point, we have constructed graphs realizing all possible pairs $(r, s)$ for $r \le 3$. We now turn to showing that, starting from this class of graphs, one can construct graphs with arbitrary regularity and prescribed Hilbert coefficients.
	\begin{theorem}\label{thm: reg=2 is sufficient for higher reg}
		Let $G$ be a graph such that 
        $\reg(S/J_{G})=2$ and $G$ has a free vertex. Given integers $r\geq 4$ and $i\geq 0$, there is a graph $H$ such that 
        \[
        \reg(S/J_H)=r \text{ and } e_i(S/H)=e_i(S/G).
        \]
	\end{theorem}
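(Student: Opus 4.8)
The plan is to attach, at the given free vertex $v$ of $G$, a tree-like gadget $W$ whose binomial edge ideal has regularity exactly $r-2$, chosen so that the enlargement is invisible to the $i$-th Hilbert coefficient. Since $v$ is free in $G$ and will be a free vertex (a leaf) of $W$ as well, the graph $H=G\cup_v W$ is a genuine decomposition, so \Cref{thm: reg and mult under decomposition}\eqref{thm: reg under decomposition} gives $\reg(S/J_H)=\reg(S/J_G)+\reg(S/J_W)=2+(r-2)=r$ essentially for free. The whole problem thus reduces to building $W$ so that simultaneously (i) $\reg(S/J_W)=r-2$ and (ii) $e_i(S/J_H)=e_i(S/J_G)$.

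For (i) I would build $W$ as a clique sum, performed at free vertices, of $a$ stars $K_{1,N}$ and $b$ biclaws $B_{N,N}$ glued consecutively into a chain, with one free leaf at the end of the chain identified with $v$. A star satisfies $\reg(S/J_{K_{1,N}})=2$ (for instance by \Cref{thm: HS and reg of join of graphs}\eqref{thm: reg of join of graphs}, writing $K_{1,N}=K_1*\overline{K_N}$), and a biclaw satisfies $\reg(S/J_{B_{N,N}})=3$ by \cite[Theorem 4.1, Theorem 4.2]{RegBEIcertainblockgraph}; crucially these values do not depend on $N$. Because every gluing is done at a vertex that is a leaf (hence free) in both incident pieces, repeated application of \Cref{thm: reg and mult under decomposition}\eqref{thm: reg under decomposition} yields $\reg(S/J_W)=2a+3b$. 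Since $r-2\geq 2$ and every integer $\geq 2$ is representable as $2a+3b$ with $a,b\in\mathbb{N}$, I can pick $a,b$ with $2a+3b=r-2$, so that $\reg(S/J_H)=r$. The leaf count $N$ is left as a free parameter, to be fixed large in the next step, which does not disturb the regularity.

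For (ii) I would compute $e_i(S/J_H)$ by iterating \Cref{thm: delete vertex}. Take $N\geq i+5$. The only vertices of $H$ of large clique degree are the \emph{hubs} of the gadget, namely the centers of the stars and the two central vertices of each biclaw; every remaining vertex of $W$ is a pendant leaf. Deleting the hubs one at a time, starting from the piece of the chain farthest from $G$ and moving inward toward $v$, each hub at the moment of its deletion has at least $N-1\geq i+3$ pendant leaves among its neighbours that are free in the current graph, so $\fcdeg\geq i+3$ and \Cref{thm: delete vertex} applies at every step. Once all hubs are removed, each former leaf, and each gluing vertex (including $v$, which reverts to being free in $G$), becomes isolated, so $H$ has been reduced to $G$ together with a collection of isolated vertices. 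Isolated vertices only multiply the Hilbert series by a power of $(1-t)^{-1}$, leaving the numerator $Q(t)$, and hence every $e_i$, unchanged; therefore $e_i(S/J_H)=e_i(S/J_G)$.

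The main obstacle is the bookkeeping in step (ii): one must check that the bound $\fcdeg\geq i+3$ genuinely persists at \emph{every} deletion, taking care that the shared gluing leaves—which are not free while both incident hubs survive—never drop a hub's free-clique degree below the threshold, and that the far-to-near deletion order restores the freeness of those leaves in time. Choosing $N$ large and deleting the outermost hubs first makes this transparent, since each hub is adjacent to at most one still-non-free gluing leaf and (in the biclaw case) to one non-free central partner, while retaining all $\geq N-1$ of its genuine pendants; the remaining verifications that the intermediate graphs have the claimed structure are routine.
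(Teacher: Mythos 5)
Your proposal is correct and follows essentially the same route as the paper: attach a chain of stars and biclaws to the free vertex of $G$ by clique sums at free vertices, obtain the regularity from additivity under decomposition (\Cref{thm: reg and mult under decomposition}), and make the gadget invisible to $e_i$ by repeatedly deleting hub vertices of free-clique degree $\geq i+3$ via \Cref{thm: delete vertex}, leaving $G$ plus isolated vertices. The only cosmetic difference is that you unify the two parity cases through the representation $r-2=2a+3b$, whereas the paper splits into $r$ even (stars only, $b=0$) and $r$ odd (one biclaw plus stars, $b=1$); likewise, your careful far-to-near deletion order is a valid but unnecessary precaution, since with $t_j\geq i+5$ each hub already has free-clique degree $\geq i+3$ in $H$ itself, so the deletions can be performed in any order.
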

	\begin{proof}
        First, let us assume that $r=2m$ for some $m\geq 2$. Consider the graph
        \[
        H=G\cup_{v}K_{1,t_1}\cup_{v_1}K_{1,t_2}\cup_{v_2}\cdots \cup_{v_{m-2}}K_{1,t_{m-1}},
        \]
        where $t_j\geq i+5$ for all $1\leq j\leq m-1$, and $v$ is free vertex in $G$ and $K_{1,t_1}$, and for $1\leq j\leq m-2$, the vertex $v_j$ is free in both $K_{1,t_j}$ and $K_{1,t_{j+1}}$ with $v_j
        \neq v_k$ for $j\neq k$. Then, by \Cref{thm: reg and mult under decomposition}\eqref{thm: reg under decomposition}, we get $\reg(S/J_H)=\reg(S/J_{G})+2(m-1)=2m$. Since $t_j\geq i+5$, each of the center vertices of the graph $K_{1,t_j}$ has free clique degree $\geq i+3$. Therefore, by repeated application of \Cref{thm: delete vertex}, we obtain $e_i(S/J_H)=e_i({S/J_{H\setminus{\{c_1,\ldots, c_{m-1}\}}}})$. Notice that the graph $H\setminus{\{c_1,\ldots, c_{m-1}\}}$ is just $G$ together with some isolated vertices, and isolated vertices do not affect the Hilbert coefficients. Hence, $e_i(S/J_{H})=e_i({S/J_{G}})$.   
		
		Next, we assume that $r=2m+1$ with $m\geq 2$. Write $r=2+3+2(m-2)$, and consider
        \[H=G\cup _{u_1}B_{s,t}\cup_{w_1}K_{1,t_1}\cup_{w_2}\cdots\cup_{w_{m-1}}K_{1,t_{m-2}},
        \]
        where $u_1$ is a free vertex in both $G_1$ and $B_{s,t}$, $w_1$ is free in both $B_{s,t}$ and $K_{1,t_1}$, and for $2\leq j\leq m-2$, $w_j$ is free in both $K_{1,t_{j-1}}$ and $K_{1,t_j}$. Further, assume that $s,t,w_j\geq i+5$ for $1\leq j\leq m-2$. Then by \Cref{thm: reg and mult under decomposition} \eqref{thm: reg under decomposition} we obtain 
        \[
        \reg(S/J_H)=\reg(S/J_{G})+\reg(S/J_{B_{s,t}})+2(m-2).
        \]
        Since $\reg(S/J_{B_{s,t}})=3$, we have $\reg(S/J_G)=2+3+2(m-2)=r$. Notice that the vertices $u,v\in V(B_{s,t})$ and center vertices of $K_{1,t_j}$ have free clique degree $\geq i+3$. Now applying Theorem \ref{thm: delete vertex} repeatedly, we get 
        \[
        e_i(S/J_H)=e_i(S/J_{H\setminus{\{u,v,c_1,\ldots,c_{m-2}\}}}).
        \]
        Again $H\setminus{\{u,v,c_1,\ldots,c_{m-2}\}}$ is just $G$ together with some isolated vertices, so we have $e_i(S/J_H)=e_i(S/J_{G})$. This completes the proof.
	\end{proof}

    \begin{remark}
        In the graph constructed in \Cref{prop: r=2or3 e_0 e_1=s>0}\eqref{prop: r=2or3 e_0 e_1=s>0 part 1}, the vertices $u$ and $v$ are free. In \Cref{prop: r=2or3 e_0 e_1=s>0}\eqref{prop: r=2or3 e_0 e_1=s>0 part 3}, the vertices $u_1$, $u_2$, and $u_3$ are free, and in \Cref{prop: r=2or3 e_1=s<=0}\eqref{prop: r=2or3 e_1=s<=0 part 1} the vertex $v$ is free as well. Consequently, in the theorem above, the assumption that the graph $G$ contains a free vertex may be omitted.
    \end{remark}

    In conclusion, we arrive at the final goal of this section, which summarizes the outcome of the preceding constructions and results.
	\begin{theorem}\label{thm: main}
		Given $i\geq 0$ and a pair $(r,s)$ with $r\geq 2, s\in \mathbb Z$, there is a graph $G$ such that 
        \[
        \reg(S/J_G)=r \text{ and } e_i(S/J_G)=s.
        \]
	\end{theorem}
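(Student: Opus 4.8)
The plan is to obtain the theorem by assembling the base cases $r=2$ and $r=3$, which were settled earlier in this section, and then lifting to arbitrary $r\geq 4$ via \Cref{thm: reg=2 is sufficient for higher reg}. The whole argument is a case analysis on $r$, on the parity of $i$, and on the sign of $s$; no new computation is needed beyond what has already been assembled. One degenerate point should be flagged at the outset: when $i=0$ the coefficient $e_0(S/J_G)$ is the multiplicity of a positive-dimensional standard graded algebra, hence always a positive integer, so for $i=0$ the realizable pairs are exactly those with $s\geq 1$, and these are the ones the constructions produce.

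For the base case $r=2$ I would argue as follows. If $i=0$, then $s\geq 1$ is realized by \Cref{prop: r=2or3 e_0 e_1=s>0}\eqref{prop: r=2or3 e_0 e_1=s>0 part 1}. If $i=1$, I combine \Cref{prop: r=2or3 e_0 e_1=s>0}\eqref{prop: r=2or3 e_0 e_1=s>0 part 3} for $s\geq 1$ with \Cref{prop: r=2or3 e_1=s<=0}\eqref{prop: r=2or3 e_1=s<=0 part 1} for $s\leq 0$. If $i\geq 2$, I split on parity and play the two constructions against one another: for even $i$, \Cref{construction: positive even e_i and negative odd e_i} covers $s\geq 1$ and \Cref{construction: negative even e_i and positive odd e_i} covers $s\leq 0$; for odd $i$, \Cref{construction: positive even e_i and negative odd e_i} covers $s\leq -1$ and \Cref{construction: negative even e_i and positive odd e_i} covers $s\geq 0$. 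In each parity the two ranges exhaust $\mathbb{Z}$, so every pair $(2,s)$ with $i\geq 2$ is realized.

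For the base case $r=3$ the pattern is identical. If $i=0$, then $s\geq 1$ is given by \Cref{prop: r=2or3 e_0 e_1=s>0}\eqref{prop: r=2or3 e_0 e_1=s>0 part 2}. If $i=1$, I combine \Cref{prop: r=2or3 e_0 e_1=s>0}\eqref{prop: r=2or3 e_0 e_1=s>0 part 4} for $s\geq 1$ with \Cref{prop: r=2or3 e_1=s<=0}\eqref{prop: r=2or3 e_1=s<=0 part 2} for $s\leq 0$. If $i\geq 2$, then \Cref{prop: reg 3 and all e_i} already supplies a graph for every $s\in\mathbb{Z}$ at once, so nothing further is required.

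Finally, for $r\geq 4$ I would reduce to the $r=2$ analysis. Given $i$ and $s$ (with $s\geq 1$ if $i=0$), the preceding $r=2$ discussion produces a graph $G_0$ with $\reg(S/J_{G_0})=2$ and $e_i(S/J_{G_0})=s$. Inspecting these witnesses, $G_0$ has a free vertex: a vertex isolated in the first factor of a join with a complete graph becomes simplicial in the join, and the Remark following \Cref{thm: reg=2 is sufficient for higher reg} records the same for the $e_0$ and $e_1$ constructions (noting moreover that the free-vertex hypothesis may be omitted entirely). Applying \Cref{thm: reg=2 is sufficient for higher reg} to $G_0$ then yields a graph $H$ with $\reg(S/J_H)=r$ and $e_i(S/J_H)=e_i(S/J_{G_0})=s$, completing the case $r\geq 4$ and hence the theorem. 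The only genuine delicacy is the bookkeeping in the last three paragraphs: for each fixed $i$ one must check that the parity- and sign-split constructions jointly realize every admissible $s$, and that the $\reg=2$ witness handed to the lifting step truly carries a free vertex so that \Cref{thm: reg=2 is sufficient for higher reg} applies.
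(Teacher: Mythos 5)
Your proposal is correct and takes essentially the same route as the paper: the paper gives no separate proof of \Cref{thm: main} at all, presenting it as the summary of \Cref{prop: r=2or3 e_0 e_1=s>0}, \Cref{prop: r=2or3 e_1=s<=0}, \Cref{construction: positive even e_i and negative odd e_i}, \Cref{construction: negative even e_i and positive odd e_i}, \Cref{prop: reg 3 and all e_i}, and the lifting result \Cref{thm: reg=2 is sufficient for higher reg} together with the remark that its free-vertex hypothesis is satisfied (or removable) --- precisely the case analysis on $r$, the parity of $i$, and the sign of $s$ that you carried out. Your one addition --- that for $i=0$ the coefficient $e_0(S/J_G)$ is the multiplicity of a positive-dimensional standard graded algebra and hence always at least $1$, so pairs with $s\leq 0$ cannot be realized when $i=0$ --- is a genuine point that the paper's statement (and abstract) glosses over; as written the theorem is literally false for $i=0$, $s\leq 0$, and your restriction to $s\geq 1$ in that case is the correct reading.
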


    \section*{Acknowledgements}
    The authors acknowledge the use of the computer algebra system Macaulay2 \cite{M2} and the online platform SageMath for testing their computations. This project began during the first author’s visit to the second and third authors at IIT Jammu, whose hospitality is gratefully acknowledged. The first author acknowledges support from the Infosys Foundation. The second author is supported by the Indian Institute of Technology Jammu, under the Seed Grant (SGR-100014).
	
	\bibliographystyle{abbrv}
	\bibliography{ref}
    \nocite{Book-Binomial-Ideals}
\end{document}